\newtheorem{thm}{\bf Theorem}[section]
\newtheorem{prop}[thm]{\bf Proposition}
\newtheorem{cor}[thm]{\bf Corollary}
\theoremstyle{definition}
\newtheorem{definition}[thm]{\bf Definition}
\theoremstyle{remark}
\newtheorem{remark}[thm]{\bf Remark}
\newtheorem{example}[thm]{\bf Example}
\numberwithin{equation}{section}
\DeclareMathOperator{\depth}{{depth}}
\DeclareMathOperator{\Tor}{{Tor}}
\DeclareMathOperator{\soc}{{{soc}}}
\DeclareMathOperator{\projdim}{{{projdim}}}
\DeclareMathOperator{\len}{{length}}
\DeclareMathOperator{\BI}{{BI}}
\DeclareMathOperator{\Burch}{{Burch}}
\DeclareMathOperator{\syz}{syz}
\def\f0{\mathbf{0}}
\def\fn{\mathfrak{n}}
\def\fm{\mathfrak{m}}
\def \PP{\mathbb P}
\def \ZZ{\mathbb Z}
\def \L{\mathcal L}
\def\CL{\mathcal{CL}}
\def\lin{{\rm lin}}
\def\Lin{{\rm LinSp}}
\begin{document}

\title[Linear syzygies]{Burch index, summands of syzygies and  linearity in resolutions}

\author[Hailong Dao]{Hailong Dao}
\address{Hailong Dao\\ Department of Mathematics \\ University of Kansas\\405 Snow Hall, 1460 Jayhawk Blvd.\\ Lawrence, KS 66045}
\email{hdao@ku.edu}

\author[David Eisenbud]{David Eisenbud}
\address{Department of Mathematics, University of California at Berkeley and the Mathematical
Sciences Research Institute, Berkeley, CA 94720, USA}
\email{de@msri.org}

 \begin{abstract}
 The Burch index is a new invariant of a local ring $R$ whose positivity implies a kind of linearity in resolutions of $R$-modules. We show that if $R$ has depth zero and Burch index at least $2$, then any non-free 7th $R$-syzygy contains the residue field as a direct summand. We compute the Burch index in various cases of interest.  \end{abstract}
\keywords{Burch rings, Burch index, free resolutions, linearity of syzygies, summands of syzygies}
\subjclass[MSC2020]{Primary: 13D02, 13H10. Secondary: 14B99}

\maketitle

\section*{Introduction}

Let $(R,\fm, k)$ be a commutative, Noetherian local ring. Little is known about the
matrices that can occur in the minimal free resolutions of a finitely generated $R$-module of infinite projective dimension. In this paper we study
conditions on $R$, expressed by an invariant that we call the \emph{Burch index}, that imply that these matrices contain many elements outside the square of the
maximal ideal, or that the syzygies of $M$ contain $k$ as a direct summand.

The second condition is generally stronger than the first: by the main theorem of Gulliksen~\cite{G},
the Koszul complex of the maximal ideal $\fm$ of $R$ is a tensor factor of the minimal $R$-free
resolution of $k$, so, when $\dim_k(\fm/\fm^2) \geq 2$,  the entries of each matrix in the minimal free resolution of $k$ generate $\fm$.
It follows that the same holds for any module with $k$ as a direct summand.

To exclude the phenomena associated with modules of finite projective dimension, it is useful to consider the case of rings of depth 0, and the Burch index of an arbitrary Noetherian local ring is defined in Section~\ref{SectionDef} by reducing to this setting.
Suppose that $R$ is a depth 0 local ring with residue field $k$, and that $R = S/I$, where $(S,\fn, k)$ is a  regular local ring. Note that  $\fn^2 \subset I\fn:(I:\fn)\subset \fn$. We define the \emph{Burch index} of $R$ (relative to $S$) to be 
$$
\Burch_S(R) = \dim_k \frac{\fn}{I\fn:(I:\fn)}.
$$
Intuitively this measures the minimal generators of $I$ that are multiples of elements
in the socle of $S/I$.  Our definition is inspired by (\cite{DKT}) in which a local ring $(R,\fm, k)$ is said to be
a \emph{Burch ring} if, in our terms, it has Burch index $\geq 1$. It is proved there that $R$ is Burch if and only
if  the second syzygy of $k$ has $k$ as a direct summand. See that paper for background. 

It turns out that
if $R$ has Burch index $\geq 2$ then similar things are true, not just for $k$, but for \emph{every} module of infinite projective
dimension:

\begin{thm}\label{maint1}
Suppose that $(R,\fm,k)$  is a local ring of depth 0 with $\Burch(R) \geq 2$. If $M$  is an $R$-module
that is not free, then   the entries of each matrix in the minimal free resolution of $\syz^R_5(M)$ generate $\fm$. Furthermore,
 $k$ is a direct summand of
$\syz_n^R(M)$ for all $n\geq 7$.
\end{thm}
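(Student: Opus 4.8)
The plan is to reduce the theorem to finding, for $i$ in an appropriate range, a ``socle column'' in the $i$-th differential of the minimal free resolution of $M$, and then to bootstrap such a column from the known case $M=k$. First, since $\depth R=0$, the Auslander--Buchsbaum formula gives $\projdim_R N\in\{0,\infty\}$ for every finitely generated $N$; thus ``non-free'' is the same as ``infinite projective dimension'', every syzygy of a non-free module is non-free, and we may replace $M$ by any syzygy and strip off free summands. Fix $R=S/I$ with $(S,\fn,k)$ regular local and $\fm=\fn/I$, and put $\mathfrak a=I:\fn$ (so $\soc(R)=\mathfrak a/I$) and $J=I\fn:\mathfrak a$; then $\fn^2+I\subseteq J\subseteq\fn$ and $\Burch_S(R)=\dim_k\fn/J\geq 2$. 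For a minimal resolution $\cdots\to F_i\xrightarrow{d_i}F_{i-1}\to\cdots$ of $M$ and $i\geq 2$, minimality gives $\soc\bigl(\syz^R_i(M)\bigr)=\soc(R)F_{i-1}$ and $\fm\,\syz^R_i(M)=d_i(\fm F_i)\subseteq\fm^2 F_{i-1}$. Two consequences: if $\soc(R)\not\subseteq\fm^2$ then $\soc(R)F_{i-1}\not\subseteq\fm^2 F_{i-1}$, whence $k\mid\syz^R_i(M)$ for every $i\geq 2$ and we are done; so assume from now on $\soc(R)\subseteq\fm^2$. And in general, since $k\mid N\iff\soc(N)\not\subseteq\fm N$, one has $k\mid\syz^R_i(M)$ if and only if (after a change of basis of $F_i$) $d_i$ has a column with all entries in $\soc(R)$, equivalently some minimal generator $v$ of $F_i$ has $\fm v\subseteq\ker d_i=\syz^R_{i+1}(M)$.

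To locate such a column, bootstrap from $M=k$. Choose a regular system of parameters $y_1,\dots,y_n$ of $S$ whose first $\Burch_S(R)$ residues span a complement of $J/\fn^2$ in $\fn/\fn^2$; for each such $t$ there is $a_t\in\mathfrak a$ with $y_ta_t\notin I\fn$, so $y_ta_t$ is a minimal generator of $I$ with $0\neq\overline{a_t}\in\soc(R)$, and in the acyclic closure of $k$ (which is the minimal resolution of $k$) the degree-$2$ divided-power variable attached to $y_ta_t$ has differential $\overline{a_t}e_t\in\soc(R)F_1$. Hence $d_2$ in the minimal resolution of $k$ has $\Burch_S(R)\geq 2$ socle columns; this recovers, and strengthens, the fact that $k\mid\syz^R_2(k)$. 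The substance of the theorem is to carry this over to an arbitrary non-free $M$. The intended mechanism is to transport the element $\overline{a_t}$ forward along the minimal resolution of $M$ using the structure of $F_\bullet^R(M)$ as a DG-module over the Koszul complex $K^R(\fm)$ --- giving degree-raising maps $\sigma_p$ with $d\sigma_p+\sigma_p d=\overline{y_p}\cdot\mathrm{id}$ --- together with the relation $\overline{a_t}\,\overline{y_t}=0$ coming from $y_ta_t\in I$; one improves the syzygy in stages, first forcing the differentials to have entries generating $\fm$ and then forcing an entry into $\soc(R)$. Here it is essential that $\Burch_S(R)\geq 2$ and not merely $\geq 1$: for $k$ the first syzygy is literally $\fm$, rigid enough that a single Burch direction suffices, whereas for a general $M$ one Burch direction is consumed in controlling the transport and certifying minimality (this is where $y_ta_t\notin I\fn$ enters) and the second direction is what actually deposits the summand. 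Once $k\mid\syz^R_n(M)$ holds, writing $\syz^R_n(M)=k\oplus N'$ and using $k\mid\syz^R_2(k)$ gives $k\mid\syz^R_{n+2}(M)$; arranging the construction to succeed at two consecutive indices $\leq 7$ then yields $k\mid\syz^R_n(M)$ for all $n\geq 7$.

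For the claim about the matrices in the minimal resolution of $\syz^R_5(M)$, the tail $d_8,d_9,\dots$ is handled by Gulliksen's theorem: $\Burch_S(R)\geq 2$ forces $\dim_k\fm/\fm^2\geq 2$ (because $\fn^2+I\subseteq J$), so the Koszul complex on a minimal generating set of $\fm$ is a tensor factor of the minimal resolution of $k$, forcing every differential there --- hence in the resolution of any module with $k$ as a direct summand --- to have entries generating $\fm$. The finitely many matrices $d_6,d_7,\dots$ preceding the stage where $k$ first splits off are covered by the ``entries generate $\fm$'' milestone of the inductive construction above; that milestone needs only one Burch direction and fewer preparatory syzygy steps (it can also be argued directly: a module whose high differentials avoided a coordinate direction would be incompatible with $\Burch_S(R)>0$), which is why the threshold for the linearity statement is $5$ rather than $7$.

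The main obstacle is the transport step of the second paragraph: making precise how many syzygy steps are needed before the Koszul homotopies can be combined with the Burch elements to deposit a genuine socle column in some $d_i$, and verifying that the element so produced is a minimal generator of the free module in its homological degree rather than lying in $\fm F_i$. This demands a careful analysis of how relations lift through a minimal resolution and of the interplay of $\fn$, $I$, $\mathfrak a$ and $J$ --- in particular, that $J/\fn^2$ having codimension at least $2$ in $\fn/\fn^2$ is exactly the amount of slack the argument consumes --- and it is this bookkeeping that pins the constants at $5$ and $7$. A secondary but necessary point is to obtain the conclusion at two consecutive syzygy indices, so that it propagates to every $n\geq 7$ regardless of parity.
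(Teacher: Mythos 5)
Your proposal gets the framing right ($k\mid N$ iff, after a basis change, the presenting matrix of $N$ has a column of socle entries; the easy case $\soc(R)\not\subseteq\fm^2$; Gulliksen's theorem handling the tail once a $k$-summand has appeared), and the observation that $d_2$ in the minimal resolution of $k$ has $\Burch_S(R)$ socle columns is correct. But the heart of the theorem --- producing a $k$-summand of $\syz^R_i(M)$ for some $i\le 5$ for an \emph{arbitrary} non-free $M$ --- is never proved: it is only described as an ``intended mechanism,'' and you say yourself that the bookkeeping pinning down the constants $5$ and $7$ is the main obstacle. That bookkeeping \emph{is} the theorem, so this is a genuine gap. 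Worse, the proposed mechanism is unavailable in the stated generality: a homotopy $\sigma_p$ with $d\sigma_p+\sigma_p d=\overline{y_p}\cdot\mathrm{id}$ on the minimal resolution of $M$ exists if and only if multiplication by $\overline{y_p}$ is zero on $M$ (a lift of an endomorphism of $M$ is null-homotopic iff that endomorphism is zero). For a general non-free $M$ one has $\fm M\neq 0$, so the resolution of $M$ carries no such Koszul homotopies and the transport of $\overline{a_t}$ cannot even begin.

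The paper's route is entirely different and avoids any DG structure. It rests on two propositions: (i) Proposition~\ref{dkt} (Kustin--Vraciu/DKT): if a map $A$ of free $S$-modules has entry ideal contained in $\fn$ but not in $\BI_S(I)$, then $k$ splits off $\ker(R\otimes A)$; (ii) Proposition~\ref{prop1}: if $J(\syz^R_{s-1}M)+J(\syz^R_sM)\subseteq K\subsetneq\fm$, then $\Tor_s^R(M,R/K)$ is a nonzero free $R/K$-module, which is incompatible with $k$ being a summand of a low syzygy of $K$. Since $\Burch(R)\ge 2$, $\BI_S(I)$ has colength $\ge 3$ in $S$ and therefore contains the preimage of no ideal $L$ with $\len(R/L)=2$; by (i), $k\mid\syz_1^R(L)$ for every such $L$, and then (ii) with $s=3$ forces $J(\syz^R_2M)+J(\syz^R_3M)=\fm$. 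Hence at least one of these two ideals is not contained in $\BI_S(I)$, and (i) applied to the corresponding differential gives $k\mid\syz^R_4(M)$ or $k\mid\syz^R_5(M)$; the claims for $n\ge 6$ and $n\ge 7$ then follow as in your last paragraph. If you want to salvage your approach, you would need to replace the Koszul homotopies by an argument that works for modules not killed by $\fm$ --- which is exactly what the $\Tor$-rigidity step (ii) accomplishes.
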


 See Theorem \ref{main-theo} for a stronger but more complicated result.

Theorem~\ref{maint1} seems to be new even in the case of the rings $R=k[x_1,\dots,x_d]/(x_1,\dots,x_d)^n$ ($n\geq 2$), which have Burch index $d$. Theorem \ref{maint1} implies immediately that over a ring of Burch index at least $2$, each module  has either finite projective dimension or maximal curvature (that is, its Poincare series has the same radius of convergence as that of the residue field). See \cite{A} for a survey of the vast literature on this subject. Also, any high enough (non-free) syzygy $M$ over such ring can be used to test for finite projective dimension, in the sense that $\Tor_i^R(M,N)=0$ for a single $i>0$ forces $N$ to have finite projective dimension. 

If $M$ is a finitely generated $R$-module, we write $J(M)$ for the ideal generated by the entries
of a minimal presentation matrix for $M$, and $\syz_n^R(M)$ for the minimal $n$-th $R$-syzygy module of $M$. Without assuming
that $R$ has depth 0 (and using the definition in Section~\ref{SectionDef} we deduce that over
rings of Burch index $\geq 2$ the presentation matrices of high syzygies contain many linear forms:

\begin{cor}\label{main-cor}
Let $(R,\fm,k)$ be a local ring and suppose $\Burch(R)\geq 2$. Let $M$ be any finitely generated module over $R$ which has infinite projective dimension. 
For $n\geq 5+\depth R$, 
$$
\dim_k  \frac{\fm}{J(\syz^R_n M)+\fm^2} \leq \depth R.
$$
\end{cor}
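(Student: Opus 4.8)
The plan is to reduce Corollary~\ref{main-cor} to the depth-zero case of Theorem~\ref{maint1} by cutting down with a maximal regular sequence, and then track how syzygies and the ideal $J(-)$ behave under this reduction. Write $d = \depth R$. First I would choose a maximal $R$-regular sequence $\fx = x_1,\dots,x_d$ inside $\fm$ that is also regular on a sufficiently high syzygy $N := \syz^R_n M$ (such a sequence exists once $n$ is large, since high syzygies have depth at least $\min\{d, \text{something}\}$ and in fact are maximal Cohen--Macaulay only in special cases, so more carefully one picks $\fx$ to be a regular sequence on $R$ contained in a sufficiently general position; the key point is that $\fx$ can be taken regular on $R$, and then $\bar R := R/\fx R$ has depth zero). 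The definition of $\Burch(R)$ in Section~\ref{SectionDef} is engineered precisely so that $\Burch(\bar R) = \Burch(R) \geq 2$; I would invoke that here.

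Next, the central step is a comparison of minimal free resolutions. Reducing mod a regular sequence $\fx$ on $R$, the complex $\bar F_\bullet = F_\bullet \otimes_R \bar R$ of a minimal $R$-free resolution $F_\bullet$ of $M$ is still a complex with the same (reduced) differentials, and if $\fx$ is additionally regular on the relevant syzygy $N = \syz^R_n M$, then $\bar N := N \otimes_R \bar R$ is the $n$-th syzygy of $M$ (or of $\bar M$) over $\bar R$ up to free summands; crucially, the presentation matrix of $\bar N$ over $\bar R$ is obtained from that of $N$ over $R$ by reduction mod $\fx$, so
\[
J(\bar N) \;=\; \bigl(J(N) + \fx R\bigr)/\fx R \subseteq \bar{\fm}.
\]
Here one must be a little careful that passing mod a regular sequence does not destroy minimality; this is standard, since $\fx \subset \fm$ and tensoring a minimal complex with $R/\fx R$ preserves the property that the differentials have entries in $\fm$ (equivalently $\bar{\fm}$).

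Now apply Theorem~\ref{maint1} (the depth-zero case) to $\bar R$ and the module $\bar M$: since $\Burch(\bar R)\geq 2$ and $\bar M$ is not free (it has infinite projective dimension over $\bar R$, as $M$ does over $R$), the entries of each matrix in the minimal $\bar R$-free resolution of $\syz^{\bar R}_5(\bar M)$ generate $\bar{\fm}$. For $n \geq 5 + d$, the module $\bar N = \overline{\syz^R_n M}$ is (up to free summands) a syzygy $\syz^{\bar R}_{n-d}(\bar M)$ with $n - d \geq 5$, hence one of these matrices, so $J(\bar N) = \bar{\fm}$, i.e. $J(N) + \fx R + \fm^2 = \fm$ (pulling back along $R \to \bar R$, noting $\bar{\fm}^2$ corresponds to $(\fm^2 + \fx R)/\fx R$). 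Therefore $\fm/(J(N) + \fm^2)$ is spanned by the images of $x_1,\dots,x_d$, giving $\dim_k \fm/(J(\syz^R_n M)+\fm^2) \leq d = \depth R$, as claimed.

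The main obstacle I anticipate is the bookkeeping in the reduction step: making precise the statement "$\overline{\syz^R_n M} = \syz^{\bar R}_{n-d}(\bar M)$ up to free summands" requires knowing that $\fx$ can be chosen regular on $M$ and on all the intermediate syzygies $\syz^R_1 M,\dots,\syz^R_n M$ simultaneously, and that the resolution stays minimal after reduction. The first is handled by the depth formula $\depth \syz^R_i M \geq \min\{i, \depth R\}$ together with prime avoidance (or more simply by noting $\fx$ regular on $R$ already forces $\operatorname{Tor}^R_j(R/\fx R, \syz^R_i M)$ to vanish for the relevant range once $i$ is large), and the second is the standard fact that $F_\bullet \otimes_R R/\fx R$ remains a minimal complex. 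Once these are in place, the shift in homological degree by exactly $d$ and the index bound $n \geq 5 + d$ fall out immediately, and the equality $\Burch(\bar R) = \Burch(R)$ from Section~\ref{SectionDef} closes the argument.
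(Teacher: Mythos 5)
Your overall strategy --- reduce modulo a maximal regular sequence to the depth-zero case, apply Theorem~\ref{maint1} over $\bar R=R/\fx R$, and then observe that $\fm/(J(\syz^R_nM)+\fm^2)$ is spanned by the images of $x_1,\dots,x_d$ --- is exactly the route the paper takes (its own proof is essentially the one-line reduction via the definition of $\Burch$ and Remark~\ref{mod-t}). But the execution has a genuine error in the reduction step. You identify $\overline{\syz^R_n M}$ with $\syz^{\bar R}_{n-d}(\bar M)$ and propose to justify this by choosing $\fx$ regular on $M$ and on all intermediate syzygies. That is impossible in general: $M$ has infinite projective dimension and may have depth $0$ (e.g.\ $M=k$), so no element of $\fm$ is $M$-regular, and $\bar M=M/\fx M$ is not the right object. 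Moreover, even when $\fx$ \emph{is} $M$-regular, the standard fact is $\overline{\syz^R_n M}\cong\syz^{\bar R}_{n}(\bar M)$ with \emph{no} degree shift, so the shift by $d$ is misplaced. The correct bookkeeping is: set $N_0=\syz^R_d M$, which has depth at least $d$ by the depth lemma; choose $\fx$ regular on $R$ and on $N_0$; then the truncation $F_{\geq d}$ of a minimal $R$-free resolution $F_\bullet$ of $M$, tensored with $\bar R$, is a minimal $\bar R$-free resolution of $\bar N_0$, so $\overline{\syz^R_n M}=\syz^{\bar R}_{n-d}(\bar N_0)$ and the presentation matrices reduce as you want. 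Applying Theorem~\ref{maint1} to the non-free $\bar R$-module $\bar N_0$ at stage $n-d\geq 5$ then gives $J(\syz^R_nM)+\fx R=\fm$, after which your final counting argument goes through verbatim.

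A second point you should not wave away: $\Burch(R)$ for positive depth is defined as a \emph{maximum} over maximal regular sequences in $R'=R[z]_{(\fm R[z])}$, so for an arbitrary $\fx$ you only get $\Burch(R/\fx R)\leq\Burch(R)$, not the equality you assert. You must choose $\fx$ that simultaneously witnesses $\Burch(\,\cdot\,)\geq 2$ and is regular on $N_0$ (passing to $R'$ to have an infinite residue field and arguing by genericity/prime avoidance); the paper is also terse about why these two requirements are compatible, but your phrasing ``the key point is that $\fx$ can be taken regular on $R$'' misses the actual constraint. Finally, you should record why $\bar N_0$ is non-free over $\bar R$ (if it were, Nakayama would force $N_0$, hence $M$, to have finite projective dimension), since Theorem~\ref{maint1} requires a non-free module.
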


In Theorem \ref{I-lin} we draw a connection between the Burch index of an ideal $I$ in a regular local ring $S$ and the linear entries of matrices in its $S$-free resolution. Our results there suggest a  connection between linearity of resolution of $I$ over $S$ and the resolution of modules over $S/I$. 

We conclude the paper by computing the Burch index for several special classes of ideals: ideals in a regular local ring of dimension $2$,  ideals of general sets of points in the projective plane,  ideals with almost linear resolution, and ideals of certain fibre products. 
We give examples of monomial ideals to illustrate our results.

The main results of this paper were suggested by extensive  computations using the computer algebra
system Macaulay2~\cite{M2}. We wish to acknowledge the 
importance of this tool, primarily produced and constantly improved by Dan Grayson and Mike Stillman.

\section{Definitions and preliminary  results}\label{SectionDef}

Throughout this paper, all rings are local and Noetherian and modules are finitely generated. We regard positively graded algebras over a field  as local as well, with the results applying to graded modules only.

\begin{definition}[\emph{relative Burch index}]\label{def1}
Let $(S,\fn, k)$ be a local ring and $I\subseteq S$ an ideal.
We define the \emph{Burch ideal} of an ideal $I\subset S$ to be $\BI_S(I) := I\fn:(I:\fn)$. 

If $I=0$ or $S = S/I$ has positive depth,  we set $\Burch_S(I)=0.$ Otherwise $\fn^2\subseteq \BI_S(I)\subseteq \fn$ and we  set
$$\Burch_S(I) = \dim_k(\fn/\BI_S(I)).$$
\end{definition}

Our definition of the (absolute) Burch index of a local ring uses the Cohen presentation, a residue field extension, and reduction modulo  regular sequences to reduce to the case of an ideal of codepth 0 in a regular local ring. By a \emph{regular presentation} of $R$, we mean a surjective local map $(S,\fn,k)\to (R,\fm,k)$ where $S$ is regular. Such a presentation is called minimal if $\dim_k \fn/\fn^2= \dim_k \fm/\fm^2$. 

\begin{definition}\label{def2}
Let $(R,\fm,k)$ be a local ring and let $\hat R$ denote the $\fm$-adic completion of $R$. 

\begin{enumerate}
\item If $\depth R=0$, let $\hat R=S/I$ be a minimal regular Cohen presentation of $\hat R$. Define $\BI(R)$ to be $\BI_S(I)\hat R\cap R$ and   $\Burch(R)$ to be 
$\Burch_S(I)$. These are well-defined by Theorem~\ref{cohen}. 
\item Assume  $\depth R>0$. Let $R'=R[z]_{(\fm R[z])}$. We define $\Burch(R)$ to be the maximum, over all maximal regular
sequences $(\underline x)$ in $R'$, of $\{\Burch(R'/\underline{x}R')\}$
\end{enumerate}
\end{definition}

Recall that a ring $R$ with $\Burch(R)>0$ is called a Burch ring in \cite{DKT}.

\begin{thm}\label{cohen}
Let $R$ be a local ring of depth $0$ and consider a regular  presentation $R=S/I$. Then the $R$-ideal $\BI_S(I)R$ and the number $\Burch_S(I)$ are independent of the presentation of $R$. 
\end{thm}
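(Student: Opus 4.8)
The plan is to show independence in two stages: first for two regular presentations of the same ring $R$ that differ by a change of ambient regular local ring, and then to reduce the general case to this one using the standard fact that any two minimal regular presentations of a complete local ring are related by an isomorphism of the regular rings, while non-minimal presentations are obtained from minimal ones by adjoining indeterminates (i.e. by a surjection $S[y_1,\dots,y_t]\to S$ whose kernel is generated by a regular sequence of linear forms modulo the maximal ideal after a coordinate change). Thus it suffices to treat two kinds of moves between presentations, and to check that both $\BI_S(I)R$ (as an ideal of $R$) and the integer $\Burch_S(I)=\dim_k(\fn/\BI_S(I))$ are unchanged under each.

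First I would record the behavior of the Burch ideal under an isomorphism $\varphi\colon S\to S'$ carrying $I$ to $I'$: since $\BI_S(I)=I\fn:(I:\fn)$ is defined purely in terms of the ideal $I$ and the maximal ideal $\fn$ using colon operations, which are preserved by ring isomorphisms, we get $\varphi(\BI_S(I))=\BI_{S'}(I')$ immediately, so both invariants agree. This disposes of the ambiguity among minimal presentations. The substantive step is the second move: comparing a presentation $R=S/I$ with the presentation $R=S[y]/(I+(y))$, where $S[y]$ denotes $S[y]_{(\fn,y)}$ (or a completion thereof) and $y$ is a single new variable; iterating handles finitely many variables. Writing $T=S[y]$, $\fn'=(\fn,y)$, and $I'=IT+(y)T$, I would compute $(I':\fn')$ and then $\BI_T(I')=I'\fn':(I':\fn')$ and compare it, after reduction modulo $I'$ (equivalently, intersecting with $R$), to $\BI_S(I)R$. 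The key algebraic input is that $y$ is a nonzerodivisor on $T$ and $T/(y)T\cong S$, together with the flatness of $S\to T$; these let one compute the colon ideals $(I':\fn')$ in terms of $(I:\fn)$ over $S$. Concretely I expect $(I':\fn') = (I:\fn)T + (y)T$ does not quite hold; rather one needs to be careful that $y\cdot 1\in (I':\fn')$ forces extra socle elements, and the correct statement is something like $(I':\fn')=(I:\fn)T+(y)$ modulo lower-order corrections — working this out is the heart of the matter.

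The main obstacle I anticipate is precisely this colon computation: showing that passing to $T=S[y]$ and adding $y$ to the ideal changes $\fn$ to $\fn'=\fn T+(y)$ and changes $\BI$ by exactly the ``amount'' $(y)$, so that $\dim_k(\fn'/\BI_T(I'))=\dim_k(\fn/\BI_S(I))$ and $\BI_T(I')\cap R$ (or its image in $R=T/I'$) equals $\BI_S(I)R$. Intuitively $y$ is a new variable that lies in $I'$ and also in the socle-related part in a trivial way, so it should contribute to $(I':\fn')$ and hence be absorbed into $I'\fn':(I':\fn')$, keeping the quotient dimension stable; but one must verify there is no interaction between $y$ and the genuine socle of $S/I$ that would either create or destroy generators of the Burch ideal. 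I would handle this by a direct element-chase using that $S/I$ has depth $0$ (so its socle is nonzero and $(I:\fn)\supsetneq I$) and that $T/I'\cong S/I$, reducing every colon over $T$ to one over $S$ via the isomorphism $T/(y)\cong S$ and the exact sequence $0\to T\xrightarrow{y} T\to S\to 0$. Once the single-variable step is established, a trivial induction on the number of adjoined variables, combined with the isomorphism-invariance from the first step and the Cohen structure theorem comparing any two regular presentations, yields that both $\BI_S(I)R$ and $\Burch_S(I)$ depend only on $R$.
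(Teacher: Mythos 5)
Your reduction strategy is sound and runs essentially parallel to the paper's: both arguments come down to comparing $S/I$ with $T/I'$ where $T\to S=T/(y)$ kills a single regular parameter contained in the lifted ideal (you build $T=S[y]$ up from a minimal presentation; the paper instead produces a common regular cover of two given presentations via the fiber product $S_1\times_R S_2$ and the Cohen structure theorem, then descends one parameter at a time — the one-variable step is the same either way). Isomorphism-invariance and the induction on the number of adjoined variables are routine, as you say.

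The gap is that you stop exactly at the step that carries all the content, and you point at the wrong colon. The computation of $(I':\fn')$ is in fact clean, with no ``lower-order corrections'': for $a\in T$ one has $a\fn'\subseteq I'=IT+(y)$ if and only if $\bar a\,\fn\subseteq I$ in $S=T/(y)$, so $(I':\fn')$ is precisely the full preimage $H$ of $(I:\fn)$. The genuine discrepancy sits in the \emph{numerator} of the second colon: $I'\fn'=I\fn T+y\fn'$, whereas the preimage of $I\fn$ is $I\fn T+(y)$, and these differ by $(y)$ versus $y\fn'$. What must actually be proved is
\begin{equation*}
\bigl(I\fn T+(y)\bigr):H \;=\; \bigl(I\fn T+y\fn'\bigr):H ,
\end{equation*}
and the nontrivial inclusion requires an argument you never supply. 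The paper's is short but not automatic: since $R$ is not a field, $I:\fn\subseteq\fn$, hence $H\subseteq\fn'$; so if $aH\subseteq I\fn T+(y)$ then $aH\subseteq\bigl((y)\cap(\fn')^2\bigr)+I\fn T=y\fn'+I\fn T$, using that $y$ is a regular parameter of the regular local ring $T$, whence $(y)\cap(\fn')^2=y\fn'$. This identity simultaneously shows $y\in\BI_T(I')$ and that $\BI_T(I')$ is the preimage of $\BI_S(I)$, which is what makes both the ideal $\BI_S(I)R$ and the codimension $\dim_k(\fn/\BI_S(I))$ match. Without this element chase (or an equivalent one) your proposal is a correct plan rather than a proof.
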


\begin{proof}
We can assume $R$ is complete and not a field. Consider any two presentations of $R$, $S_1 \twoheadrightarrow R$ and $S_2\twoheadrightarrow R$. Then $S=S_1\times_RS_2$ is also complete local, and thus by the Cohen Structure Theorem it is a homomorphic image of another regular local ring $T$. Hence it is enough to prove the equality of our definitions for the pair $(T,S_1)$ and since $T$ maps onto $S_1$, by induction we reduce to the case where our presentations are coming from $(T, \fn_T) \twoheadrightarrow (S, \fn_S)=T/(x) \twoheadrightarrow R$ where $x\in \fn_T-(\fn_T)^2$.

Suppose $R=S/I$. Let $(J,\fm)$ be lifts of $(I,\fn_S)$ to $T$ respectively. By that we mean: the minimal generators of $J$, together with $x$, generate the preimage of $I$ in $T$ minimally, and similarly for $\fm, \fn_S$.  Write $\fn$ for $\fn_T$.

We need to show that $$J\fm+(x): [(J,x):\fn] = (J+(x))\fn: [(J,x):\fn] $$ (the LHS is $\BI_S(I)$ and the RHS is $\BI_T(I)$). Observe that $(J+(x))\fn = J\fm+x\fn$. Let $A=T/J\fm$ and (slightly abuse notations) $x,\fn, H$ be the images of $x,\fn, (J,x):\fn$  in $A$. We are reduced to showing $(x):H =x\fn:H$. By assumption $R$ is not a field so $H\subset \fn$. Thus, if $a\in  (x):H$, then $aH \subset (x)\cap \fn^2 = (x)\fn$, and we are done. 
\end{proof}

The following result shows in particular that to compute the Burch index for rings of positive depth one must use  regular sequences consisting entirely of elements not in the square of the maximal ideal, that is, \emph{linear} regular sequences:

\begin{prop}\label{Tor_ind}
 Let $S$ be regular and $I,J$ be ideals such that $IJ=I\cap J$ (equivalently, $\Tor^S_{i>0}(S/I,S/J)=0$). If $S/I,S/J$ are both singular, then $\Burch(S/(I+J))=0$. In particular, if $R=S/(I+f)$, where $f$ is a nonzerodivisors in the square of the maximal ideal of $S/I$, then $\Burch(R)=0$. 
\end{prop}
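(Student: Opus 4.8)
Set $L=I+J$ and $R=S/L$. The plan is to reduce to $\depth R=0$ and there prove $\BI_S(L)=\fn$, which by Definition~\ref{def1} gives $\Burch(R)=\dim_k\fn/\BI_S(L)=0$. For the reduction: we may complete, and if $\depth R>0$ we must show $\Burch(R'/\underline x R')=0$ for every maximal regular sequence $\underline x$ of $R'=R[z]_{(\fm R[z])}$. Write $R'=S'/(I'+J')$ with $S'=S[z]_{(\fn S[z])}$ regular; by flat base change still $\Tor^{S'}_{>0}(S'/I',S'/J')=0$ and $S'/I',S'/J'$ are singular. A coset prime-avoidance argument lifts $\underline x$ to a sequence $\underline y$ that is regular on $S'/J'$ (and remains $R'$-regular, as it reduces to $\underline x$). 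Then $S'/(J'+(\underline y))$ is again singular (a singular local ring stays singular modulo a regular sequence, since otherwise it would be Cohen--Macaulay and hence regular), and $\Tor^{S'}_{>0}\!\bigl(S'/I',\,S'/(J'+(\underline y))\bigr)=0$ because this $\Tor$ computes the Koszul homology of $\underline y$ on $S'/I'\otimes^{L}_{S'}S'/J'=R'$, which vanishes in positive degrees since $\underline y$ is $R'$-regular. Thus $R'/\underline x R'=S'/\bigl(I'+(J'+(\underline y))\bigr)$ satisfies the same hypotheses, now with depth $0$.

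So assume $\depth R=0$ and that $S\to R$ is a regular presentation; by Theorem~\ref{cohen}, $\Burch(R)=\Burch_S(L)$. Since for $a\in L:\fn$ one has $a\fn\subseteq L$ and $a\fn^2\subseteq\fn L$, the condition $\fn\cdot(L:\fn)\subseteq\fn L$ (i.e. $\BI_S(L)=\fn$) is equivalent to the vanishing of the induced $k$-bilinear multiplication
\[
\bar\mu\colon \soc(R)\otimes_k(\fn/\fn^2)\longrightarrow L/\fn L,\qquad \bar a\otimes\bar x\longmapsto \overline{ax}.
\]
Now the $\Tor$-vanishing makes the minimal $S$-free resolution of $R$ the tensor product of those of $S/I$ and $S/J$; hence $L/\fn L=\Tor_1^S(R,k)=(I/\fn I)\oplus(J/\fn J)$, the two summands being $\Tor^S_1(S/I,k)\otimes_k\Tor^S_0(S/J,k)$ and $\Tor^S_0(S/I,k)\otimes_k\Tor^S_1(S/J,k)$. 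Moreover, as $S/J$ is perfect over the regular ring $S$, there is a natural isomorphism $\mathbf R\Hom_S(k,R)\simeq\mathbf R\Hom_S(k,S/I)\otimes^{L}_S S/J$; since $\mathbf R\Hom_S(k,S/I)$ is formal (by Koszul self-duality over $S$ it is a shift of $k\otimes^L_S S/I$, represented by the minimal resolution of $S/I$ tensored with $k$), taking degree $0$ gives
\[
\soc(R)=\Ext^0_S(k,R)=\bigoplus_{i\ge0}\Ext^i_S(k,S/I)\otimes_k\Tor^S_i(k,S/J)=\bigoplus_{i\ge1}\Ext^i_S(k,S/I)\otimes_k\Tor^S_i(k,S/J),
\]
the $i=0$ term being $\soc(S/I)\otimes_k k=0$: indeed $\Tor$-independence and $\depth R=0$ give $\depth S/I+\depth S/J=\dim S$, and $\depth S/I=0$ would force $J=0$, contradicting singularity of $S/J$; so $\soc(S/I)=\soc(S/J)=0$.

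The key point is that $\bar\mu(\bar a,-)$ is the map $\bar a_*\colon\Tor^S_1(k,k)\to\Tor^S_1(R,k)$ induced by $\bar a\colon k\to R$, and this Yoneda action is compatible with the bigradings coming from $R=S/I\otimes^{L}_S S/J$: a class in $\Ext^i_S(k,S/I)\otimes_k\Tor^S_i(k,S/J)$ sends $\Tor^S_1(k,k)$ into $\Tor^S_{1-i}(S/I,k)\otimes_k\Tor^S_i(S/J,k)\subseteq\Tor^S_1(R,k)$. For $i\ge2$ this subspace is $0$, and the $i=0$ summand of $\soc(R)$ is absent, so $\operatorname{im}\bar\mu$ lies in the $J/\fn J$ summand. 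Running the same argument with $I$ and $J$ exchanged (using now that $S/I$ is perfect and $\soc(S/J)=0$) puts $\operatorname{im}\bar\mu$ inside the $I/\fn I$ summand as well; hence $\bar\mu=0$, so $\BI_S(L)=\fn$ and $\Burch(R)=0$. The last assertion is the case $J=(f)$: after subtracting an element of $I$ we may take $f\in\fn^2$, so $S/(f)$ is singular, and $\Tor^S_{>0}(S/I,S/(f))=0$ because $f$ is a nonzerodivisor on $S/I$ (tensor $0\to S\xrightarrow{f}S\to S/(f)\to0$ with $S/I$; of course $S/I$ is assumed singular, as in the hypothesis).

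The step I expect to be the main obstacle is this last homological input: that $\bar\mu$ really is the Yoneda action of $\soc(R)=\Ext^0_S(k,R)$ on $\Tor^S_1(-,k)$, and that this action respects the Künneth bigradings of $\soc(R)$ and of $\Tor^S_1(R,k)$. This should be verified either by tracking the natural isomorphism $\mathbf R\Hom_S(k,S/I\otimes^{L}_S S/J)\simeq\mathbf R\Hom_S(k,S/I)\otimes^{L}_S S/J$ through the formality of $\mathbf R\Hom_S(k,S/I)$, or by an explicit chain-level computation with the tensor-product resolution $F_\bullet\otimes_S G_\bullet\to R$ of $S/I$ and $S/J$; everything else — the reductions and the elementary inclusions $a\fn\subseteq L$, $a\fn^2\subseteq\fn L$ — is routine.
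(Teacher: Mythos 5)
Your proposal takes a genuinely different route from the paper. The paper's proof is an indirect two-liner: under the hypotheses $R=S/(I+J)$ carries Tor-independent modules of infinite projective dimension (so $R$ is not ``Tor-friendly'' in the sense of \cite{AINS}), while Burch rings are Tor-friendly by \cite[Theorem~1.4]{DKT}; hence $\Burch(R)=0$. You instead compute $\BI_S(I+J)$ directly, which is more self-contained and more informative (it actually exhibits $\fn(L:\fn)\subseteq\fn L$ for $L=I+J$), at the cost of considerably more homological machinery. The skeleton is sound: the reduction of $\BI_S(L)=\fn$ to the vanishing of $\bar\mu$, the identification of $\bar\mu(\bar a,-)$ with the map $\Tor_1^S(k,k)\to\Tor_1^S(R,k)$ induced by $\bar a\colon k\to R$, the decomposition $L/\fn L\cong I/\fn I\oplus J/\fn J$ (which uses $I\cap J=IJ\subseteq\fn I\cap\fn J$), the vanishing of $\soc(S/I)$ and $\soc(S/J)$ via Auslander's depth formula, and the final intersection $(\fn I+J)\cap(I+\fn J)=\fn I+\fn J$ are all correct.

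However, as written the proof is incomplete at exactly the point you flag: the claim that the Yoneda action of $\soc(R)=\Ext^0_S(k,R)$ on $\Tor_1^S(-,k)$ respects the two K\"unneth bigradings is asserted, not proved, and it is the entire content of the argument. It is true, and your suggested chain-level route does close it: representing a class of $\Ext^i_S(k,S/I)\otimes_k\Tor^S_i(k,S/J)$ by a cocycle $\tilde\phi\colon K_i\to S$ (with $\tilde\phi(K_i)\subseteq\fn$ once $I\subseteq\fn^2$, forced by the cocycle condition) together with a Koszul cycle $\tilde v$ with entries in $\fn$ (forced by $J\subseteq\fn^2$), one computes $x_\ell a=\sum(\text{cycle condition terms})\cdot\tilde\phi(\cdot)+\sum(\text{cocycle condition terms})\cdot\tilde v_{(\cdot)}\in\fn J+\fn I$ for $i\ge 2$ and $\in J+\fn I$ for $i=1$; but this verification, and the normalization $I,J\subseteq\fn^2$ it relies on, must actually be carried out. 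A second, smaller gap is the reduction to depth $0$: choosing the lift $\underline y$ regular on $S'/J'$ by coset prime avoidance requires showing that the associated primes of $S'/J'$ containing $I'+J'$ are associated to $R'$ (which follows from the localized depth formula, since $\Tor$-independence gives $\depth(S'/J')_P=0\Rightarrow\depth R'_P=0$ for such $P$), so that they cannot contain a lift of the nonzerodivisor $x_1$; this is not spelled out. So: correct strategy, genuinely different from and more explicit than the paper's, but with its decisive step left as an acknowledged obligation rather than discharged.
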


\begin{proof}
Let $R=S/(I+J)$. Then $R$ is not $\Tor$-friendly, in the language of \cite{AINS}. Namely, there are $R$ modules $M,N$ of infinite projective dimension such that  $\Tor_{i>0}^R(M,N)=0$. On the other hand, Burch rings are $\Tor$-friendly, by \cite[Theorem 1.4]{DKT}. 
\end{proof}

At this point, we have enough to give basic estimates and precise values for the Burch index of certain well known rings. See also Section~\ref{examplesSection}.

\begin{prop}\label{basic}
Let $(R, \fm,k)$ be a local ring. 
\begin{enumerate}
\item $\Burch(R)\leq \dim_k(\fm/\fm^2)-\depth R$. 
\item If $R = S/J\fn$ where $(S,\fn)$ is a regular local ring and $0\neq J\subset \fn$, then $\Burch(R)=\dim S$. 
\item If $R$ is a Cohen-Macaulay ring of minimal multiplicity, then $\Burch(R)= \dim_k(\fm/\fm^2)-\depth R$.
\item If $R$ is a singular hypersurface, then $\Burch(R)=1$.
\end{enumerate}
\end{prop}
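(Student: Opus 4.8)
The plan is to handle the four assertions separately, in each case using Definition~\ref{def2} to reduce to an ideal of codepth $0$ in a regular local ring and then computing the relevant Burch ideal by hand; Proposition~\ref{Tor_ind} will be used to discard ``bad'' regular sequences when $\depth R>0$. I expect the one real obstacle to be in part (1): one must rule out a maximal regular sequence lying inside $\fm^2$, which would produce a depth-$0$ quotient of too large an embedding dimension, and this is precisely where Proposition~\ref{Tor_ind} enters.

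For part (1), I would first dispatch the depth-$0$ case: for a minimal regular presentation $R=S/I$ one has $\fn^2\subseteq\BI_S(I)$ and $\dim_k(\fn/\fn^2)=\dim S=\dim_k(\fm/\fm^2)$, so $\Burch(R)=\dim_k(\fn/\BI_S(I))\le\dim_k(\fm/\fm^2)$, which is the claimed bound since $\depth R=0$. When $\depth R=t>0$, Definition~\ref{def2}(2) reduces us to bounding $\Burch(R'/\underline x R')$ for each maximal regular sequence $\underline x=x_1,\dots,x_t$ in $R'=R[z]_{(\fm R[z])}$; the quotient $A:=R'/\underline x R'$ has depth $0$ and embedding dimension $\dim_k\big(\fm_{R'}/(\fm_{R'}^2+(\underline x))\big)$. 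If the images of the $x_i$ in $\fm_{R'}/\fm_{R'}^2$ are linearly independent, this equals $\dim_k(\fm/\fm^2)-t$, and one finishes by applying the depth-$0$ case to $A$. Otherwise, after reordering, $x_{s+1}\in\fm_{R'}^2+(x_1,\dots,x_s)$ for some $s<t$, so $x_{s+1}$ is a nonzerodivisor in the square of the maximal ideal of $R'/(x_1,\dots,x_s)$; Proposition~\ref{Tor_ind} then gives $\Burch(R'/(x_1,\dots,x_{s+1}))=0$, and unwinding Definition~\ref{def2} shows this vanishing passes to the further quotient $A$. Either way $\Burch(A)\le\dim_k(\fm/\fm^2)-t$.

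For part (2), I would first observe that $R=S/J\fn$ has depth $0$: a minimal generator $g$ of $J$ is nonzero in $R$ by Nakayama and satisfies $g\fn\subseteq J\fn$, so $\bar g\in\soc(R)$; moreover $\fm_R/\fm_R^2=\fn/\fn^2$, so $S\twoheadrightarrow R$ is minimal and $\Burch(R)=\Burch_S(J\fn)$. Since $J\subseteq J\fn:\fn$ we have $\BI_S(J\fn)\subseteq J\fn^2:J$, so it suffices to prove $J\fn^2:J=\fn^2$, the inclusion $\supseteq$ being automatic. Given $a$ with $aJ\subseteq J\fn^2$, write $ag_i=\sum_j h_{ij}g_j$ with all $h_{ij}\in\fn^2$ for minimal generators $g_1,\dots,g_m$ of $J$; then $(g_1,\dots,g_m)^{T}$ lies in the kernel of the matrix $aI-(h_{ij})$, so Cramer's rule together with the fact that $S$ is a domain forces $\det(aI-(h_{ij}))=0$, and comparing $\fn$-adic orders in this relation, which is monic in $a$, yields $a\in\fn^2$. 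Hence $\Burch_S(J\fn)=\dim_k(\fn/\fn^2)=\dim S$.

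For parts (3) and (4) the common idea is to kill a generic linear regular sequence to reach an explicit Artinian quotient. For (3), I would pass to $R'=R[z]_{(\fm R[z])}$, which is still Cohen--Macaulay of minimal multiplicity but with infinite residue field, and choose a minimal reduction $\underline y$ of $\fm_{R'}$ that is a linear regular sequence of length $d=\dim R=\depth R$; minimal multiplicity gives $\fm_{R'}^2=\underline y\,\fm_{R'}$, so $\bar R:=R'/\underline y R'$ is Artinian with $\fm_{\bar R}^2=0$ and embedding dimension $\dim_k(\fm/\fm^2)-d$, whence its minimal Cohen presentation is $\bar S/\fn_{\bar S}^2$ with $\dim\bar S=\dim_k(\fm/\fm^2)-\depth R$. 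The computation $\BI_{\bar S}(\fn_{\bar S}^2)=\fn_{\bar S}^3:\fn_{\bar S}=\fn_{\bar S}^2$ gives $\Burch(\bar R)=\dim\bar S$, a lower bound for $\Burch(R)$ by Definition~\ref{def2}(2), and part (1) supplies the matching upper bound. For (4), a singular hypersurface has minimal presentation $\hat R=S/(f)$ with $0\neq f\in\fn_S^2$, so part (1) already gives $\Burch(R)\le\dim_k(\fm/\fm^2)-\depth R=1$; conversely, passing to $R'$ and killing a generic linear regular sequence of length $\dim R$ lands on an Artinian ring of embedding dimension $1$ that is not a field, hence of the form $V/(t^m)$ with $V$ a complete discrete valuation ring and $m=\ord(f)\ge 2$, and $\BI_V((t^m))=(t^{m+1}):(t^{m-1})=(t^2)=\fn_V^2$ gives $\Burch(V/(t^m))=\dim_k(\fn_V/\fn_V^2)=1$.
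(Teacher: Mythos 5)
Your proof is correct and takes essentially the same approach as the paper: reduce to depth $0$ (discarding regular sequences that are not linear via Proposition~\ref{Tor_ind}) and compute the Burch ideal explicitly in each case, reaching $S/\fn^2$ for part (3) and $V/(t^m)$ for part (4). The one place you add real substance is the inclusion $J\fn^2:J\subseteq\fn^2$ in part (2), which the paper asserts without proof; your determinant-plus-order argument works, though it can be shortened by noting that for $a\in J\fn^2:J$ and $g\in J$ of minimal $\fn$-adic order $d$ one has $\ord(ag)\ge d+2$, hence $\ord(a)\ge 2$ and $a\in\fn^2$ because $\gr_{\fn}(S)$ is a domain.
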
 

\begin{proof}

$(1)$ The inequality is obvious if $\depth R=0$. When $\depth R>0$, by \ref{Tor_ind}, to compute $\Burch(R)$, we just need to use a regular sequence in $\fm-\fm^2$, thus the statement reduces to the $\depth 0$ case, whence the assertion is obvious. 

$(2)$ Clearly $\depth S/J\fn=0$. We have $\BI_S(J\fn)=  J\fn^2:(J\fn:\fn) \subseteq J\fn^2:J \subseteq  \fn^2 \subseteq  \BI_S(J\fn)$.

$(3)$ We may assume $R$ is Artinian, and thus has the presentation $S/\fn^2$ for some regular local ring $(S, \fn)$, so we can apply \noindent $(2)$ The inequality is obvious if $\depth R=0$. When $\depth R>0$, by \ref{Tor_ind}, to compute $\Burch(R)$, we must use a regular sequence in $\fm-\fm^2$, so the statement reduces to the $\depth 0$ case.

$(4)$ By the argument  above, the computation reduces to the Artinian case, so we can assume $R=S/(t^a)$ for some DVR $(S,(t))$ and some $a>1$, for which the claim is clear. 

\end{proof}

\section{Burch index and linearity}

Let $(S,\fn,k)$ be a local ring, let $I\subseteq S$ an ideal and set $R=S/I$. Our main results link high values of Burch index to  linear elements in the $S$-free resolution of $R$  and the $R$-free resolutions of $R$-modules.  

\begin{definition}\label{def-lin}
Recall that we write $J(M)$ for the ideal generated by the entries
of a minimal presentation matrix for a finitely generated module $M$. We set 
$$
\begin{aligned}
 \Lin_n^S(M) &:= \frac{J(\syz^S_{n-1}M) +\fn^2}{\fn^2}\\
\lin_n^S(M) &:= \dim_k \Lin_n^S(M), 
\end{aligned}
$$
and think of $\Lin_n^S(M)$ as the vector space of linear forms in the $n$-th matrix in a minimal free resolution of $M$.
\end{definition}

In the case where the ring $S$ has depth $\geq 2$, the following result gives another characterization
 of $BI_S(I)$.
 
\begin{prop}\label{bm}
Let $t\in \fn$ be a nonzerodivisor. If $t\notin \BI_S(I)$ then $k$ is a direct summand of $I/tI$. If $\depth S\geq 2$ then
the converse also holds.
\end{prop}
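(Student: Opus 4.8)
\emph{Proof sketch.} The plan is to reduce both implications to the standard criterion that, for a finitely generated module $N$ over $(S,\fn,k)$, the residue field $k$ is a direct summand of $N$ if and only if $\soc(N):=(0:_N\fn)$ is not contained in $\fn N$; indeed, given $x\in\soc(N)\setminus\fn N$ the submodule $Sx\cong k$ splits off via any $k$-linear functional on $N/\fn N$ not vanishing on the image of $x$, and the converse is immediate. I will apply this with $N=I/tI$.

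\emph{First implication.} Suppose $t\notin\BI_S(I)=I\fn:(I:\fn)$, so $ty\notin I\fn$ for some $y\in I:\fn$. I would first record two elementary facts: $ty\in I$ (because $t\in\fn$ and $\fn y\subseteq I$), and $y\notin I$ (otherwise $ty\in tI\subseteq\fn I=I\fn$). Let $x$ denote the class of $ty$ in $I/tI$. Then $\fn x=0$ since $\fn(ty)=t(\fn y)\subseteq tI$, and $x\neq 0$ since $ty\notin tI$ (as $t$ is a nonzerodivisor and $y\notin I$). The one genuine computation is that $x\notin\fn(I/tI)$, i.e.\ $ty\notin\fn I+tI$: if $ty=a+tb$ with $a\in\fn I$ and $b\in I$, then $t(y-b)=a\in I\fn$ while $tb\in tI\subseteq I\fn$, forcing $ty\in I\fn$, a contradiction. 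Hence $\soc(I/tI)\not\subseteq\fn(I/tI)$ and $k$ is a direct summand of $I/tI$.

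\emph{Converse, assuming $\depth S\geq 2$.} Now suppose $k$ is a direct summand of $I/tI$; by the criterion there is a socle element of $I/tI$ outside $\fn(I/tI)$, which I lift to $w\in I$, so that $\fn w\subseteq tI$, $w\notin tI$, and $w\notin\fn I+tI$. The key step is to divide $w$ by $t$: since $t$ is a nonzerodivisor we have $\depth S/tS=\depth S-1\geq 1$, hence $\fn\notin\Ass(S/tS)$ and therefore $tS:_S\fn=tS$; as $\fn w\subseteq tI\subseteq tS$, this yields $w=tv$ for a unique $v\in S$. Cancelling $t$ in $\fn w=t\,\fn v\subseteq tI$ gives $\fn v\subseteq I$, so $v\in I:\fn$, while $v\notin I$ because $tv=w\notin tI$. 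Finally $tv=w\notin\fn I+tI\supseteq I\fn$, so $t\cdot v\notin I\fn$ with $v\in I:\fn$; this is exactly the statement $t\notin\BI_S(I)$.

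\emph{Main obstacle.} Apart from bookkeeping with colon ideals, the two substantive points are (i) the verification $ty\notin\fn I+tI$ in the first implication, which relies on $tI\subseteq\fn I$ and the definition of $\BI_S$, and (ii) the division $w=tv$ in the converse. I expect (ii) to be the crux: it is the only place the hypothesis $\depth S\geq 2$ enters (via $tS:_S\fn=tS$, i.e.\ $\fn\notin\Ass(S/tS)$), and that hypothesis is genuinely needed there.
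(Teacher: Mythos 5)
Your proof is correct and takes essentially the same route as the paper's: your socle element $ty$ is the element-wise form of the paper's inclusion $t(I:\fn)\subseteq (tI):\fn$, and your division step $w=tv$ (via $tS:\fn=tS$ when $\depth S/tS\geq 1$) is exactly the paper's identity $(tI):\fn=t(I:\fn)$ under the hypothesis $\depth S\geq 2$. No gaps.
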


\begin{proof}
Since $t\in \fn$ is regular, $tI:\fn \subseteq tI:(t) = I$, and thus 
$$
\soc(I/tI) = ((tI:\fn)\cap I)/tI = (tI:\fn)/tI.
$$
Thus $I/tI$ has a $k$-summand  if and only if  $(tI):\fn\not \subseteq \fn I$. 

If $t\notin \BI_S(I) = \fn I : (I:\fn)$, then $t(I:\fn)\not\subseteq \fn I$, and since $t(I:\fn) \subseteq (tI):\fn$,
$I/tI$ has a $k$-summand.

If $\depth S\geq 2$ then  $\fn$ contains an element that is a nonzerodivisor on $S/tS$, so $(tI):\fn=t(I:\fn)$, and the previous argument can be reversed. 
\end{proof}

\begin{remark}\label{mod-t}
Let $(S,\fm)$ be a local ring and $M$ an $S$-module.  If $t\in S$ is a regular element on $M$ and $S$, then for each $i>0$, 
$$
J(\syz_{i-1}^{S/tS}(M/tM)) = \bigl(J(\syz_{i-1}^S(M))+tS\bigr)/tS.
$$
Thus $ \lin_i^{S/tS}(M/tM) = \lin_i^S(M) $ if the image of $t$ in $\fn/\fn^2$ is outside of $\Lin^S_i(M)$,
and otherwise $ \lin_i^{S/tS}(M/tM) = \lin_i^S(M) -1$.
\end{remark}

The matrices in the $S$-free resolution of ideals with positive Burch have many linear entries:

\begin{thm}\label{I-lin}\label{cor-lin}
 Let $(S, \fn,k)$ be a local ring of embedding dimension $n$, and let $I\subset \fn$ be an ideal. If $0<i< \projdim I$, then:
\begin{enumerate}
\item If $\Burch_SI \geq 1$ and $S$ has positive depth, then  $\lin_i^S(I) \geq n-1$.
\item If $\Burch_SI \geq 2$ and $\dim (P+\fn^2)/\fn^2 \leq n-2$ for each associated prime $P$  of $S$ then  $\lin_i^S(I) = n$.
\end{enumerate}
\end{thm}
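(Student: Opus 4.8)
The plan is to reduce everything to the depth-zero case and then exploit Proposition~\ref{bm} together with the behavior of linear strands under taking quotients by linear nonzerodivisors, recorded in Remark~\ref{mod-t}. Concretely, I first want to show that it suffices to treat the case $\depth S = 0$: if $\depth S > 0$, pick a nonzerodivisor $t \in \fn$; under the hypothesis of part (1) or (2) one can choose $t$ to be a \emph{linear} nonzerodivisor, i.e.\ $t \in \fn \setminus \fn^2$, and moreover (using Proposition~\ref{Tor_ind}, or a direct Tor-vanishing argument) one can arrange $t$ to be regular on $S/I$ as well, so that $\BI_S(I)$ and hence $\Burch_S(I)$ are unchanged upon passing to $\bar S = S/tS$, $\bar I = I/tI$. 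By Remark~\ref{mod-t}, $\lin_i^{\bar S}(\bar I) = \lin_i^S(I)$ as long as $t \notin \Lin_i^S(I)$, and the embedding dimension drops by exactly one. After finitely many such steps we are reduced to the depth-zero situation, \emph{provided} we can always find such a $t$ outside the relevant linear strand — this bookkeeping about which linear forms are "used up" is where the inequalities $n-1$ versus $n$ in (1) versus (2) come from, and it is the step I expect to be the main obstacle.

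Granting the reduction, in the depth-zero case the core point is: $\Burch_S(I) \geq 1$ means $\fn \neq \BI_S(I)$, so there is a linear form $t \in \fn \setminus \BI_S(I)$; by (the depth-zero analogue / the forward direction of) Proposition~\ref{bm}, $k$ is a direct summand of $I/tI$. Write $I/tI \cong k \oplus I'$. A direct summand isomorphic to $k$ contributes a full row/column of linear entries to each matrix in a minimal free resolution — more precisely, $\syz_i^{\bar S}(k)$ has a minimal presentation matrix all of whose entries lie in $\fn$ (indeed, by Gulliksen the Koszul complex on $\fn$ is a tensor factor of the resolution of $k$, so the entries of each matrix generate $\fn$ once $n \geq 2$). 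Hence $\Lin_i^{\bar S}(k) = \fn/\fn^2$ has dimension $n - 1$ (the embedding dimension of $\bar S$, which is $n-1$ after one quotient in part (1)). Splitting off the $k$-summand and relating $\syz_i^{\bar S}(I/tI)$ to $\syz_i^S(I)$ via Remark~\ref{mod-t} then yields $\lin_i^S(I) \geq n-1$, which is part (1).

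For part (2), the hypothesis $\Burch_S(I) \geq 2$ gives \emph{two} independent linear forms outside $\BI_S(I)$, equivalently $\dim_k(\fn/\BI_S(I)) \geq 2$; the extra codimension assumption on associated primes of $S$ (that no associated prime "spans" a codimension-$1$-or-less linear subspace mod $\fn^2$) is precisely what is needed to guarantee that among the linear forms avoiding $\BI_S(I)$ we can choose one avoiding all associated primes of $S$, i.e.\ a genuine linear nonzerodivisor, \emph{and still} avoid the finitely many proper subspaces $\Lin_i^S(I)$ for the relevant range of $i$. Running the argument of part (1) with this better choice of $t$ produces a $k$-summand of $I/tI$ over the regular-local quotient $\bar S = S/tS$ of embedding dimension $n-1$, whose linear strand already fills $\fn_{\bar S}/\fn_{\bar S}^2$; pulling back, the one linear form $t$ we quotiented by is now \emph{also} forced into $\Lin_i^S(I)$ because $\Burch \geq 2$ lets us re-run the argument with a second independent choice $t'$, and $t, t'$ together span a $2$-dimensional space inside $\Lin_i^S(I)$, combining with the $n-2$ dimensions coming from the $k$-summand over the iterated quotient to give all of $\fn/\fn^2$, i.e.\ $\lin_i^S(I) = n$. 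The delicate point throughout — and the one I would write out most carefully — is the simultaneous avoidance: we must pick the cutting linear forms to be nonzerodivisors on $S/I$ (to preserve $\Burch$ and not disturb the resolution) while also not prematurely lying in $\Lin_i^S(I)$ (so that the count of linear entries genuinely transfers), and the two hypotheses $\depth S > 0$ in (1) and the associated-prime condition in (2) are exactly the inputs that make such a choice possible.
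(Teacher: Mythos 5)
Your overall strategy runs in the wrong direction and breaks the one tool the proof actually needs. The engine of the argument is Proposition~\ref{bm}, whose forward direction requires $t\in\fn$ to be a \emph{nonzerodivisor on $S$}: only then is $\soc(I/tI)=(tI:\fn)/tI$, and only then does $t\notin\BI_S(I)$ produce a $k$-summand of $I/tI$. If you reduce to $\depth S=0$, no element of $\fn$ is a nonzerodivisor, and there is no ``depth-zero analogue'' of Proposition~\ref{bm}; the hypothesis ``$S$ has positive depth'' in part (1) is there precisely so that a nonzerodivisor $t\notin\BI_S(I)$ exists (by avoidance of the associated primes of $S$ and of $\BI_S(I)\subsetneq\fn$). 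Relatedly, your plan to choose $t$ regular on $S/I$ is vacuous: $\Burch_S(I)\geq 1$ forces $\depth S/I=0$ by Definition~\ref{def1}, so no such $t$ exists. The intended argument does not cut $S$ down at all: for a nonzerodivisor $t\notin\BI_S(I)$ one passes to the $S/tS$-module $I/tI$, which has a $k$-summand, so (for $0<i<\projdim I$) $\Lin_i^{S/tS}(I/tI)$ is all of $\fn/(\fn^2+(t))$, and Remark~\ref{mod-t} pulls this back to $\Lin_i^S(I)+\langle t\rangle=\fn/\fn^2$, giving $\lin_i^S(I)\geq n-1$ in one step.

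For part (2) your mechanism is a non sequitur: knowing $\langle t\rangle+\Lin_i^S(I)=\fn/\fn^2$ and $\langle t'\rangle+\Lin_i^S(I)=\fn/\fn^2$ for two independent $t,t'$ does \emph{not} force $t,t'\in\Lin_i^S(I)$ --- a hyperplane can miss both of two independent vectors, so your data are perfectly consistent with $\lin_i^S(I)=n-1$. The correct use of $\Burch_SI\geq2$ together with the codimension hypothesis on associated primes is an avoidance argument: the ``bad'' vectors (those in $(\BI_S(I)+\fn^2)/\fn^2$ or in some $(P_j+\fn^2)/\fn^2$) lie in a finite union of subspaces of codimension at least $2$ in $\fn/\fn^2$; \emph{every} $t$ outside that union is a nonzerodivisor with $t\notin\BI_S(I)$ and hence satisfies $\langle t\rangle+\Lin_i^S(I)=\fn/\fn^2$; if $\Lin_i^S(I)$ were contained in a hyperplane $H$, then $H$ --- which cannot be covered by finitely many codimension-$\geq2$ subspaces --- would contain such a good $t$, forcing $\fn/\fn^2=\langle t\rangle+\Lin_i^S(I)\subseteq H$, a contradiction. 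You would need to replace your two-vector argument with something of this form for part (2) to close.
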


\begin{proof} 
 If $t\notin \BI_S(I)$ is a nonzerodivisor, then
by \ref{bm} the module $I/tI$  has a $k$-summand, so $\Lin_i^{S/tS}(I/tI)= \fn/(\fn^2+(t))$, and thus $\Lin_i^S(I)+\langle t\rangle=\fn/\fn^2$ by Remark $\ref{mod-t}$, hence 
$\lin_i^S(I)\geq n-1$.

If $\Burch_SI \geq 1$ and $S$ has positive depth then, by prime avoidance, we can choose such an element $t$,
proving $(1)$.

If $\Burch_SI \geq 2$ then let the associated primes of $S$ be $P_1,\dots, P_a$. Let $V_0= (\Burch_SI) +\fn^2)/\fn^2$   and  $V_i= (P_i+\fn^2)/\fn^2$. For each nonzero vector $t\in \fn/\fn^2$ outside of $\cup_{j=0}^a V_j$ we must have $\langle t\rangle+\Lin_i^S(I)=\fn/\fn^2$ and each $V_j$ has codimension at least $2$,  so item (2) follows.
\end{proof}

%
%

\begin{example}
We cannot  deduce $\Burch(I)$ from the dimensions $\lin_i^S(I)$. For example if $I=(x^3,y^3,z^3, x^2y,y^2z,z^2x)\subset S=k[[x,y,z]]$, then $\lin_1(I)=\lin_2(I)=3$, but $\Burch_S(I)=0$.  However, if $I$ is an ideal of projective dimension 1, then Corollary~\ref{proj1} says that $\Burch(S/I)= \min\{\lin_1^S(I), 2\}$. 
\end{example}

\section{Burch index and $k$-direct summands}

As we explained in the introduction, the condition that a module $M$ has $k$ as a direct summand
is stronger than the condition that $\lin_i(M)$ is the maximal ideal. Our main theorem strengthens Theorem~\ref{maint1}.

\begin{thm}\label{main-theo}
Let $(R,\fm, k)$ be a local ring of depth $0$ and embedding dimension $\geq 2$. For every non-free $R$-module $M$:
\begin{enumerate}

\item If $\Burch(R)\geq 2$ then $k$ is a direct summand of $\syz^R_i(M)$ for some $i\leq 5$ and for all $i\geq 7$. 

\item If $\Burch(R)\geq 1$, and $k$ is a direct summand of  $\syz^R_s(\BI(R))$ for some $s\geq 1$, then $k$ is a direct summand of $\syz^R_i(M)$ for some $i\leq s+4$ and for all $i\geq s+6$.
\end{enumerate}
Thus, if $R$ is Burch, then $k$ is a direct summand of all high syzygies of non-free $R$-modules if and only if $k$ is a direct summand of some $R$-syzygy of $\BI(R)$. 
\end{thm}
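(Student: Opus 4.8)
\emph{Proof plan.}
The plan is to derive the equivalence from parts (1) and (2), treating those as already in hand, after the usual reductions. Since completion and a residue field extension are faithfully flat, they change neither $\Burch(R)$, nor $\BI(R)$, nor the property that $k$ is a direct summand of a prescribed syzygy; so I may assume $R$ is complete with infinite residue field and fix a minimal Cohen presentation $R=S/I$ with $(S,\fn,k)$ regular and $I\subseteq\fn^2$. Two consequences of $\depth R=0$ are used repeatedly: every $R$-module of finite projective dimension is free (Auslander--Buchsbaum), and $\ann_R(\fm)=\soc(R)\neq0$ annihilates every submodule of $\fm$, so no nonzero submodule of $\fm$ is free.

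For the implication ``$\Leftarrow$'', suppose $k$ is a direct summand of $\syz^R_s(\BI(R))$ for some $s\geq1$. Since $R$ is Burch, i.e. $\Burch(R)\geq1$, these are precisely the hypotheses of part (2), whose conclusion states that $k$ is a direct summand of $\syz^R_i(M)$ for all $i\geq s+6$ and every non-free $M$. For the implication ``$\Rightarrow$'', suppose $k$ is a direct summand of all sufficiently high syzygies of non-free $R$-modules, and specialize this to $M=\BI(R)$. If $\BI(R)\neq0$, then $\BI(R)\subseteq\fm$ is non-free by the remark above, hence of infinite projective dimension, so all its syzygies are nonzero and the hypothesis applied to it yields some $s\geq1$ with $k$ a direct summand of $\syz^R_s(\BI(R))$, as wanted. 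If instead $\BI(R)=0$, then $\BI_S(I)\subseteq I$, and since $\fn^2\subseteq\BI_S(I)\subseteq I\subseteq\fn^2$ we get $I=\fn^2$, hence $\fm^2=0$. For such $R$ every $\fm$-multiple of a free module is a $k$-vector space, so every syzygy of every non-free module is a nonzero $k$-vector space; both conditions in the equivalence then hold trivially (the right-hand side being vacuous, it is read as automatically satisfied when $\BI(R)=0$), and there is nothing more to prove.

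The substance therefore lies in parts (1) and (2). Part (2) combines two inputs: the characterization (\cite{DKT}) that $\Burch(R)\geq1$ is equivalent to $k$ being a direct summand of $\syz^R_2(k)$ — which gives a two-step propagation, since $\syz^R_{n+2}(M)=\syz^R_2(\syz^R_n M)$ contains $\syz^R_2(k)$, hence $k$, as a summand once $k$ is a direct summand of $\syz^R_n(M)$ — and a structural lemma to the effect that for every non-free $M$ some bounded syzygy $\syz^R_c(M)$ already has $\BI(R)$ (or, when available, $\fm=\syz^R_1(k)$ or $k$ itself) as a direct summand. Concatenating the structural lemma with the splitting of $k$ off $\syz^R_s(\BI(R))$ and iterating the propagation produces the bounds $s+4$ and $s+6$; under the stronger hypothesis $\Burch(R)\geq2$ one can both begin at a smaller $c$ and ensure that both parities are reached, which yields the constants $5$ and $7$ in part (1). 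I expect the structural lemma to be the main obstacle: one must locate the Burch ideal (or a suitable $k$-summand) inside a syzygy of bounded order of an \emph{arbitrary} non-free module. The natural route is to choose a generic linear form $t\in\fn\setminus\BI_S(I)$ avoiding the relevant associated primes, reduce modulo $t$ in the spirit of Proposition~\ref{bm} and Remark~\ref{mod-t}, and exploit the socle element of $R$ that witnesses $\Burch(R)\geq1$; keeping track of which parity is attained at which stage is the delicate bookkeeping that pins down the explicit numbers.
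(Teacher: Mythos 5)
Your reduction to the complete case and your derivation of the closing ``if and only if'' from parts (1) and (2) are fine (including the degenerate case $\BI(R)=0$, which you rightly flag as needing a convention). But the substance of the theorem is parts (1) and (2) themselves, and there your argument has a genuine gap: everything is made to rest on an unproved ``structural lemma'' asserting that for every non-free $M$ some syzygy $\syz^R_c(M)$ of bounded order has $\BI(R)$ (or $k$, or $\fm$) as a \emph{direct summand}. You identify this as the main obstacle and offer only a heuristic (generic linear form, reduction mod $t$), but no proof; and in the stated form I see no reason the lemma should hold --- nothing in the paper establishes that $\BI(R)$ splits off a syzygy of an arbitrary module. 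The actual mechanism is different and avoids any such decomposition. One first shows (Proposition~\ref{prop1}) that if the entries of two consecutive differentials of $M$ all lie in a proper ideal $K\subsetneq\fm$, then $\Tor^R_s(M,R/K)$ is a nonzero free $R/K$-module, which is incompatible with $k$ being a direct summand of a low syzygy of $K$. When $\Burch(R)\geq 2$ the ideal $\BI_S(I)$ has colength $\geq 3$ in $S$, so the preimage of any ideal $L$ with $\len(R/L)=2$ is \emph{not} contained in $\BI_S(I)$; Proposition~\ref{dkt} then gives $k\mid\syz^R_1(L)$ for every such $L$, and Proposition~\ref{prop1} forces $J(\syz^R_2M)+J(\syz^R_3M)=\fm$. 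Hence at least one of these two ideals of entries escapes $\BI_S(I)$, and Proposition~\ref{dkt} applied to the corresponding differential produces the $k$-summand in $\syz^R_4(M)$ or $\syz^R_5(M)$. No splitting of $\BI(R)$ off a syzygy is ever needed.

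A second, independent gap is in your propagation step. Splitting $\syz^R_2(k)$ off $\syz^R_{n+2}(M)=\syz^R_2(\syz^R_n M)$ only propagates the $k$-summand in steps of two, so starting from a single index $i\leq 5$ you reach only one parity class $i,i+2,i+4,\dots$, whereas the theorem asserts $k\mid\syz^R_i(M)$ for \emph{all} $i\geq 7$. You acknowledge that ``both parities must be reached'' but give no argument. The paper gets all indices at once: once $k$ is a summand of some $\syz^R_i(M)$ with $i\leq 5$, Gulliksen's theorem (embedding dimension $\geq 2$) forces $J(\syz^R_j(M))=\fm$ for every $j\geq 6$, and then Proposition~\ref{dkt} applies to \emph{every} subsequent differential, yielding the $k$-summand at every step from $7$ on. To repair your write-up you would need either to prove your structural lemma and supply a parity argument, or to switch to the entry-ideal/obstruction mechanism above.
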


 \begin{remark}
 The case of depth 0 and embedding dimension 1 is different, but easy to analyze completely: in this case 
  $\hat R = S/t^n$ for some discrete valuation ring $(S,t)$, so $\BI_S(R) = t$. Any indecomposable $R$-module has the form
  $M = R/t^m$ and the syzygies of $M$ have $k$ as direct summand in alternate steps iff $m= 1$ or $m=n-1$.
\end{remark}

The following Propositions will be used in the proof of Theorem \ref{main-theo}. Recall the convention that the ideal of entries of a matrix with no columns is $R$. We first show that the ideals $J(\syz^R_{s-1}(M)) + J(\syz_s^R(M))$, for any $R$-module $M$, are an obstruction
to socle summands:

\begin{prop}\label{prop1}
Let $M$ be a module over the local ring $(R,\fm)$. If 
$$
\bigl(J(\syz^R_{s-1}(M)) + J(\syz_{s}^R(M))\bigr) \subseteq K \subsetneq \fm 
$$
 for some ideal $K$ of $R$ then $k$ is not a summand of $\syz_p(K)$ for any $1\leq p\leq s-2$.
\end{prop}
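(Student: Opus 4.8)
The plan is to rephrase everything in terms of $\Tor^R(M,-)$ and then to dimension‑shift in the second variable. Fix a minimal free resolution $(F_\bullet,\partial_\bullet)$ of $M$, so that $J(\syz^R_{i-1}(M))$ is the ideal of entries of $\partial_i\colon F_i\to F_{i-1}$. The first observation is that the hypothesis already forces $\syz^R_s(M)$ to be non‑free: were it free (or zero), its minimal presentation matrix would have no columns, whence $J(\syz^R_s(M))=R$ by the stated convention, contradicting $J(\syz^R_s(M))\subseteq K\subsetneq\fm$. Therefore $\projdim_R M\ge s+1$ and $\beta^R_i(M)=\rank F_i>0$ for all $0\le i\le s+1$.

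Next I would compute $\Tor^R_s(M,R/K)$ from the complex $F_\bullet\otimes_R R/K$. By hypothesis the entries of $\partial_s$ lie in $J(\syz^R_{s-1}(M))\subseteq K$ and the entries of $\partial_{s+1}$ lie in $J(\syz^R_s(M))\subseteq K$, so both induced maps $\partial_s\otimes R/K$ and $\partial_{s+1}\otimes R/K$ are zero. Hence
$$
\Tor^R_s(M,R/K)\;=\;\ker(\partial_s\otimes R/K)\,/\,\operatorname{im}(\partial_{s+1}\otimes R/K)\;=\;F_s\otimes_R R/K\;\cong\;(R/K)^{\beta^R_s(M)},
$$
a nonzero free $R/K$-module.

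Now I would shift dimensions in the second argument. From $0\to K\to R\to R/K\to 0$ one has $\syz^R_{p+1}(R/K)=\syz^R_p(K)$, and iterating the long exact sequence of $\Tor^R(M,-)$ gives $\Tor^R_s(M,R/K)\cong\Tor^R_{s-p-1}(M,\syz^R_p(K))$ whenever $s>p+1$, i.e.\ whenever $p\le s-2$ — exactly the range in the statement, and exactly what makes every intermediate isomorphism valid. Suppose, for contradiction, that $k$ is a direct summand of $\syz^R_p(K)$ for some $p$ with $1\le p\le s-2$. Then $\Tor^R_{s-p-1}(M,k)=k^{\beta^R_{s-p-1}(M)}$ is a direct summand of the free $R/K$-module $(R/K)^{\beta^R_s(M)}$, hence is itself free over the local ring $R/K$. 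But $1\le s-p-1\le s-2<s+1\le\projdim_R M$, so $\beta^R_{s-p-1}(M)>0$; thus $k^{\beta^R_{s-p-1}(M)}$ is a nonzero free $R/K$-module annihilated by $\fm$, which is absurd since $\fm\cdot(R/K)^{m}=(\fm/K)^{m}\ne 0$ for $m\ge 1$ (as $K\subsetneq\fm$). This contradiction proves the claim.

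The argument is short and I do not anticipate a genuine obstacle; the only points that need care are the bookkeeping in the chain of $\Tor$-isomorphisms (the hypothesis $1\le p\le s-2$ is tight for it) and the invocation of the no‑columns convention for $J$, which is what lets one extract from the hypothesis that $\projdim_R M\ge s+1$.
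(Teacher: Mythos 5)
Your proof is correct and follows essentially the same route as the paper's: reduce the hypothesis to the vanishing of $\partial_s\otimes R/K$ and $\partial_{s+1}\otimes R/K$, conclude that $\Tor_s^R(M,R/K)$ is a nonzero free $R/K$-module, and then dimension-shift to $\Tor_{s-1-p}^R(M,\syz_p^R(K))$ to produce a forbidden $k$-summand. Your version is slightly more careful than the paper's in explicitly checking that $\beta_{s-p-1}^R(M)>0$ (via the no-columns convention), a point the paper leaves implicit.
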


\begin{proof}
Let $d_i$ be the $i$-th differential in a minimal free resolution of $M$. The hypothesis is equivalent to the statement
that $R/K \otimes d_{s} = 0 = R/K\otimes d_{s+1}$. This implies that $\Tor_{s}^R(M,R/K)$ is a nonzero free $R/K$-module,
and thus has no $k$-summand. But if
$k$ were a direct summand of $\syz_p^R(K)$ with $1\leq p\leq s-2$ then 
$$
\Tor_{s}^R(M,R/K)= \Tor_{s-1}^R(M, K) = \Tor_{s-1-p}^R(M, \syz_p^R(K))
$$
 would have a $k$-summand, a contradiction.
\end{proof}

%
%
%

We will use~\cite[Lemma 4.1]{KV}, which was generalized in~\cite[Proposition 3.4]{DKT}. For the reader's convenience we give the statement and proof:

\begin{prop}\label{dkt}
Let $(S,\fn,k)$ be a local ring, $I\subset S$ an ideal and $R=S/I$. Let $A:F\to G$ be a map of free $S$-modules, and let $J$ be the ideal generated by the entries of a matrix representing $A$. If  $J \subset \fn$ but $J \nsubseteq \BI_S(I)$, then $k$ is a direct summand of $\ker R\otimes A$.   
\end{prop}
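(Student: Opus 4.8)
The plan is to reduce the claim to the standard fact that, for a finitely generated module $N$ over a Noetherian local ring $(R,\fm,k)$, the field $k$ is a direct summand of $N$ precisely when $\soc(N)\nsubseteq\fm N$: any $x\in\soc(N)\setminus\fm N$ spans a copy of $k$ inside $N$, and a surjection $N\to N/\fm N\to k$ that is nonzero on the class of $x$ splits the inclusion $Rx\hookrightarrow N$. So I need only exhibit one element of $\ker(R\otimes A)$ that lies in the socle but outside $\fm\cdot\ker(R\otimes A)$.

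First I would extract a convenient witness from the hypothesis. Writing $F=S^m$, $G=S^n$ with bases $f_1,\dots,f_m$ and $g_1,\dots,g_n$, the matrix of $A$ is $(a_{ij})$ with all $a_{ij}\in J\subseteq\fn$. Since $J\nsubseteq\BI_S(I)=I\fn:(I:\fn)$, after permuting the bases we may assume $a:=a_{11}\notin I\fn:(I:\fn)$, i.e. there is $b\in I:\fn$ with $ab\notin I\fn$. Let $\bar b$ be the image of $b$ in $R$ and set $v:=\bar b\,f_1\in R\otimes F$. (Note $b\notin I$, since otherwise $ab\in aI\subseteq I\fn$, so $\bar b\neq0$.) Two properties are immediate: $\fm v=0$ because $b\fn\subseteq I$; and $(R\otimes A)(v)=0$ because the $i$-th coordinate of $A(bf_1)$ is $ba_{i1}\in I$ (as $b\in I:\fn$ and $a_{i1}\in\fn$). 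Hence $v\in\soc(\ker(R\otimes A))$. The model case to keep in mind is $m=n=1$, where $\ker(R\otimes A)=(I:a)/I$ and $v$ is the class of $b$.

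The heart of the argument, and the step I expect to be the main obstacle, is showing $v\notin\fm\cdot\ker(R\otimes A)$. I would argue by contradiction: if $v=\sum_k m_k u_k$ with $m_k\in\fm$ and $u_k\in\ker(R\otimes A)$, lift to $S$ by choosing $\tilde m_k\in\fn$ and $\tilde u_k\in F$ with $A(\tilde u_k)\in IG$ and $bf_1-\sum_k\tilde m_k\tilde u_k\in IF$. Applying $A$ gives $A(bf_1)-\sum_k\tilde m_k A(\tilde u_k)\in A(IF)$. The key point is that, because every entry of $A$ lies in $\fn$, one has $A(IF)\subseteq I\fn G$ and $\sum_k\tilde m_k A(\tilde u_k)\in\fn\cdot IG=I\fn G$; therefore $A(bf_1)=\sum_i(ba_{i1})g_i\in I\fn G$. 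Comparing coordinates in the free module $G$ forces $ba_{i1}\in I\fn$ for every $i$, in particular $ab=ba_{11}\in I\fn$, contradicting the choice of $b$. Thus $v$ lies in $\soc(\ker(R\otimes A))$ but not in $\fm\cdot\ker(R\otimes A)$, and the criterion of the first paragraph finishes the proof. The only place genuine content enters is the chain of containments landing inside $I\fn G$ rather than merely $IG$ — exactly where the assumption $J\subseteq\fn$ is used; the rest is lifting and coordinate bookkeeping.
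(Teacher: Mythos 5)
Your proof is correct and follows essentially the same route as the paper's: the witness $\bar b f_1$ with $b\in I:\fn$ and $ab\notin I\fn$ is exactly the element $\overline{se}$ the paper uses, and the decisive containment $A(IF)+\fn\cdot A^{-1}(IG)\subseteq A^{-1}(I\fn G)$ (where $J\subseteq\fn$ enters) is the same in both arguments. If anything, your explicit handling of the $A(IF)\subseteq I\fn G$ term makes the "not in $\fm\cdot\ker$" step slightly more complete than the paper's terse version.
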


\begin{proof} 
The hypothesis means that there is are elements $s\in I:\fn$ and $t\in J$ such that $st \notin \fn I$. Thus
for some minimal generator $e$ of $F$,  $A(se) \notin \fn IG$.

Write $\overline{\phantom{A}}$ for reduction mod $I$.
 We claim that $\overline{se}$ is a socle element and a direct
summand of $\ker \overline A$.
Clearly 
$$
se \notin A^{-1}(\fn IG) \supset \fn A^{-1}(IG),
$$
so $\overline{se} \notin \fn \ker \overline A$.
On the other hand, $\overline{se}\in \soc \overline F$, and since
$J\subset \fn$ we have $\overline{se} \in \ker \overline A$.  
\end{proof}

%
%
%
%

\begin{proof}[Proof of Theorem~\ref{main-theo}]
 Let $\L_2(R)$ be the set of ideals of $R$ with $\len(R/L)=2$. 

\noindent $(1)$ Since $\len(S/\BI_S(I))\geq 3$, the ideal $\BI_S(I)$ does not contain the preimage in $S$ of
any $L\in \L_2(R)$.  By Proposition~\ref{dkt} the module $k$ is a direct summand of $\syz_2^R(R/L)=\syz_1^R(L)$, so Proposition~\ref{prop1} with $s=3$ gives $J(\syz_2^RM)+J(\syz_3^R(M))= \fm$, and thus not both of 
$J(\syz_2^R(M))$ and $J(\syz_3^R(M))$ can be contained in  $\BI_S(I)$. By Proposition~\ref{dkt}, $k$ is a direct summand of  $\syz_{4}^R(M)$ or $ \syz_{5}^R(M)$, as claimed. 

Since $R$ has embedding dimension $\geq 2$ we have $J(\syz_i^R(k)) = \fm$ for every $i\geq 1$,
and thus $J(\syz_i^R(M)) = \fm$ for every $i\geq 6$.
Using Proposition~\ref{dkt} again we see that $k$ is a summand of $\syz_i^R(M)$ for each $i\geq 7$. 

\noindent $(2)$ In this case $\len S/\BI_S(R) = 2$, so the preimage in $S$ of an ideal $L\in \L_2(R)$ 
contains $\BI_S(R)$ if and only if it is equal to $\BI_S(R)$. Thus the reasoning of part $(1)$ applies to every $L\in \CL(R)$ except for $\BI(R)$ itself, to which we can apply the hypothesis of part $(2)$. 
\end{proof}

\begin{example}
We can see that theorem \ref{main-theo}  is sharp by taking $S=k[[a,b]], I=(a,b^2)^2$. Set $R=S/I$ and let $M=R/(a,b^2)$.  Then $\BI_S(I)=(a,b^2), \Burch(R)=1$ and one can see that $\syz_1^R(M)\cong M^{\oplus 2}$. Thus $k$ is not a direct summand of any $R$-syzygy of $M$. 
\end{example}

\begin{example}\label{sharpness of 5}
The bound of $5$ in Theorem \ref{main-theo} cannot be improved. Let $k=\ZZ/(101)$ and $S=k[x,y,z]$. Let $I=(x^3,y^3,z^3)(x,y,z)$ and $R=S/I$. Let $M=R/(xyz)$. Macaulay2 \cite{M2} tells us that $\Burch(R)=3$ and $\syz_5^R(M)$ is the first syzygy of $M$ that has a $k$-summand. 
\end{example}


\begin{cor}
Let $(R, \fm,k)$ be a local ring of embedding dimension $n$ and $\depth R=t$. If $\Burch(R)\geq 2$, then $\lin_i(M)\geq n-t$ for any module $M$ of infinite projective dimension and  each $i\geq t+5$. 
\end{cor}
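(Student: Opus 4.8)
The plan is to reduce the statement about a general ring $R$ of depth $t$ to the depth-zero situation handled by Theorem~\ref{main-theo}(1) and the linearity statement of Theorem~\ref{maint1}, exactly as in the passage from Theorem~\ref{maint1} to Corollary~\ref{main-cor}. First I would pass to $R' = R[z]_{(\fm R[z])}$, which changes neither the embedding dimension, nor the depth, nor $\Burch$, nor (by the flat base change $R \to R'$) the relevant invariants $\lin_i(M \otimes_R R')$; so I may assume the residue field is infinite and hence that a maximal regular sequence $\underline x = x_1,\dots,x_t$ can be chosen inside $\fm \setminus \fm^2$ with $\Burch(R/\underline x R) = \Burch(R) \geq 2$ (this is exactly how $\Burch$ of a positive-depth ring is computed, using Proposition~\ref{Tor_ind} to see that only linear regular sequences are relevant).

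Next I would apply Remark~\ref{mod-t} repeatedly: since each $x_j$ is a nonzerodivisor on $R$ and on the module $M$ (after first taking a sufficiently high syzygy of $M$ to ensure $\underline x$ is $M$-regular — any high syzygy of an infinite-projective-dimension module has depth $\geq t$ by the depth lemma, so is $\underline x$-regular), killing $x_j$ drops $\lin_i$ by at most $1$ at each step. Concretely, writing $\bar R = R/\underline x R$ and $\bar M = M/\underline x M$, Remark~\ref{mod-t} gives
$$
\lin_i^R(M) \geq \lin_i^{\bar R}(\bar M) - t.
$$
Now $\bar R$ has depth $0$, embedding dimension $n - t \geq 2$ (here one uses $\Burch(R) \geq 2$ together with Proposition~\ref{basic}(1), which forces $n - t \geq 2$), and $\Burch(\bar R) \geq 2$. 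Since $M$ has infinite projective dimension over $R$, $\bar M$ has infinite projective dimension over $\bar R$ (this is where finiteness of the regular sequence and standard change-of-rings for Tor are used), so $\bar M$ is not free.

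Then Theorem~\ref{maint1}, applied to $\bar R$ and $\bar M$, tells us that the entries of each matrix in the minimal free resolution of $\syz^{\bar R}_5(\bar M)$ generate $\fm_{\bar R}$; that is, $\lin_j^{\bar R}(\bar M) = n - t$ for every $j \geq 6$, or more precisely $J(\syz_{j-1}^{\bar R}(\bar M)) = \fm_{\bar R}$ for $j - 1 \geq 5$, i.e. $J(\syz_\ell^{\bar R}(\bar M)) = \fm_{\bar R}$ for $\ell \geq 5$. Hence for $i \geq t + 5$ we should track indices carefully: $\syz_i^R(M)$ modulo $\underline x$ is related to $\syz_i^{\bar R}(\bar M)$ — up to free summands from the Koszul tensor — and the linear part of the $i$-th matrix over $R$ surjects onto that over $\bar R$ minus the $t$-dimensional contribution of $\underline x$. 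Combining, $\lin_i^R(M) \geq (n - t) - t$… which is too weak; so the right bookkeeping is instead: $J(\syz_{i-1}^R(M)) + \fm^2 \supseteq$ preimage of $\fm_{\bar R}$ once $i - 1 \geq 5 + t$, giving $\dim_k \fm/(J(\syz_{i-1}^R(M)) + \fm^2) \leq t$, equivalently $\lin_i^R(M) \geq n - t$ for $i \geq t + 5$.

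\textbf{The main obstacle.} The delicate point is the index bookkeeping in the reduction modulo a length-$t$ regular sequence: one must verify that $\syz_i^{R}(M) \otimes_R \bar R$ agrees with $\syz_i^{\bar R}(\bar M)$ up to free summands (using that $\underline x$ is regular on $M$ and all its syzygies of the relevant range, which needs $M$ itself or a low syzygy to have depth $\geq t$ — handled by replacing $M$ with $\syz_t^R(M)$ and shifting indices, absorbed into the "$i \geq t+5$" range), and that under this identification the linear forms over $R$ in degree $i$ differ from those over $\bar R$ exactly by the span of the images of $\underline x$, so that $\lin_i^R(M) \geq \lin_i^{\bar R}(\bar M)$ is in fact an equality-up-to-$t$ in the direction we need. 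This is precisely the content of Remark~\ref{mod-t} iterated $t$ times, so the proof amounts to invoking that Remark with care about which syzygy of $M$ one starts from; no genuinely new idea beyond Theorem~\ref{maint1} is required.
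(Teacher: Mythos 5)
Your argument is correct in substance and follows exactly the route the paper takes: reduce to depth zero by passing to an infinite residue field, choosing a linear maximal regular sequence realizing the Burch index, iterating Remark~\ref{mod-t}, and then invoking Theorem~\ref{maint1} (i.e.\ part (1) of Theorem~\ref{main-theo}); the paper's own proof is just a two-line citation of these ingredients, and your write-up supplies the bookkeeping they leave implicit. The only small quibble is that the regularity of the chosen sequence on the high syzygies of $M$ is better justified by noting that syzygies are submodules of free modules (so their associated primes lie among those of $R$) than by the depth lemma alone, but this does not affect the conclusion.
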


\begin{proof}
Suppose $\depth R=t$ and $\Burch(R)\geq 2$. If $t=0$, we use part $(1)$ of \ref{main-theo}.

The case $t>0$ follows from the definition of Burch index, Remark \ref{mod-t} and induction. 
\end{proof}

\section{Some computations of the Burch index}\label{examplesSection}

\begin{thm}\label{dim2}\label{proj1}
Let $(S, \fm)$ be a regular local ring. Let $I\subset \fm^2$ be an ideal of projective dimension $1$. Then $\Burch(S/I)= \min\{\lin_1^S(I), 2\}$. 
\end{thm}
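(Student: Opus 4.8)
The plan is to analyze an ideal $I \subset \fm^2$ of projective dimension $1$ via its Hilbert–Burch presentation and reduce, as in the definition of the Burch index, to the Artinian case. Since $\projdim_S I = 1$, $S/I$ is Cohen–Macaulay of codimension $1$, so $\depth S/I = \dim S - 1$. If $\dim S = 1$ this is the DVR case already handled in the remark after Theorem~\ref{main-theo}, so assume $\dim S \geq 2$. By Proposition~\ref{Tor_ind} and the definition of $\Burch$, we may cut down by a maximal \emph{linear} regular sequence on $S/I$; such a sequence lies in $\fm - \fm^2$ (and can be chosen outside the finitely many associated primes and avoiding the obstruction to linearity), and by Remark~\ref{mod-t} modding out by one such element drops $\lin_1^S(I)$ by exactly $1$ unless that element already lies outside $\Lin_1^S(I)$, in which case $\lin_1$ is unchanged. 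So I would first argue that we may assume $S$ is a regular local ring of dimension $1$... wait, that collapses $I$ to a principal ideal in a DVR, which has $\lin_1 = 0$ or $1$ depending on whether $I = (t^a)$ with $a = 1$ or $a > 1$ — but $I \subset \fm^2$ forces $a \geq 2$, giving $\lin_1 = 0$, contradicting the hypersurface computation. The resolution is that the reduction must stop when $S/I$ becomes Artinian, i.e. when $\dim S = 1$ and $I$ is $\fm$-primary; here $\projdim_S I = 1$ already forces $S/I$ Artinian only if $\dim S = 1$. So in fact the only case with $\projdim = 1$ and positive depth of $S/I$ is $\dim S \geq 2$, and reducing modulo a maximal linear regular sequence brings us to $\dim S = 1$, $I = (t^a) \subset \fm^2$ — and then $\Burch(S/I) = 1$ by Proposition~\ref{basic}(4), while $\min\{\lin_1, 2\}$ must also equal $1$ after the reductions. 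Tracking this carefully is the crux: I need to show each linear regular-sequence step drops both sides of the claimed equality compatibly, so it suffices to prove the statement when $S/I$ is Artinian, i.e. $\dim S = 1$.

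Reconsidering, the cleaner route avoids induction on dimension and instead works directly. Set $R = S/I$ and $n = \dim S$. The ideal $\BI_S(I) = I\fm : (I:\fm)$ contains $\fm^2$, so $\Burch_S(I) = \dim_k \fm/\BI_S(I) \leq \dim_k \fm/\fm^2 = n$; but by Proposition~\ref{basic}(1), $\Burch(S/I) \leq n - \depth(S/I) = n - (n-1) = 1$ when $n \geq 2$ — wait, that gives $\Burch \leq 1$, not $\leq 2$. Hmm, but for $n = 1$, $\depth(S/I) = 0$ and the bound is $1$ as well. So actually $\Burch(S/I) \leq 1$ always for a codimension-$1$ Cohen–Macaulay quotient, and the formula $\min\{\lin_1^S(I), 2\}$ should really be proven to equal $\min\{\lin_1^S(I), 1\}$ in disguise, OR $\lin_1^S(I) \leq 1$ — but the earlier example has $\lin_1(I) = 3$ for a codimension-$3$ ideal, so for $\projdim = 1$ I should check $\lin_1 \leq 2$ directly. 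Let me restart the key computation: with $\projdim I = 1$, write the Hilbert–Burch resolution $0 \to S^{g-1} \xrightarrow{\varphi} S^g \to I \to 0$ where $I$ is generated by the maximal minors of $\varphi$. Then $J(\syz_0^S(I)) = J(I) = I \subseteq \fm^2$, so $\lin_1^S(I) = \dim_k (I + \fm^2)/\fm^2 = 0$?? No — $\Lin_1^S(I) = (J(\syz_0^S I) + \fm^2)/\fm^2$ and $\syz_0^S(I) = I$, whose minimal presentation matrix is $\varphi$, not the list of generators of $I$. So $J(\syz_0^S I) = J$, the ideal of entries of $\varphi$, the first matrix in the resolution of $I$.

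So the real content is: with $\varphi: S^{g-1} \to S^g$ the Hilbert–Burch matrix, $\lin_1^S(I) = \dim_k(J(\varphi) + \fm^2)/\fm^2$, and I must show $\Burch(S/I) = \min\{\lin_1^S(I), 2\}$. I would first handle the Artinian case $\dim S = 1$ directly: here $g = 1$, $\varphi$ is the empty matrix, $J(\varphi) = S$ (by the stated convention), so $\lin_1^S(I) = 1$... that's wrong too since then $\min = 1$ but there $g=1$ means $I$ principal, $I = (t^a)$, $a \geq 2$, and indeed $\Burch = 1$ — consistent! For $\dim S = 2$: $R = S/I$ has depth $1$; reduce mod one linear nonzerodivisor $t$ outside the relevant primes and outside $\Lin_1^S(I)$ if possible, landing in the DVR case with $\lin_1$ unchanged, giving $\Burch = 1$ and hence $\min\{\lin_1^S(I), 2\} = 1$ forces $\lin_1^S(I) \geq 1$, which I'd verify is automatic since $I \neq \fm I$ forces... actually the cleanest is: invoke Theorem~\ref{I-lin}(1), which gives $\lin_i^S(I) \geq n-1$ when $\Burch_S I \geq 1$ and $\depth S > 0$; combined with $\lin_i^S(I) \leq n-1$ for $0 < i < \projdim I = 1$ is vacuous — there is no such $i$. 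Therefore, the honest plan is: (a) prove the Artinian base case $n = 1$ by the DVR computation (Proposition~\ref{basic}(4) and the matrix convention); (b) for $n \geq 2$, use Proposition~\ref{Tor_ind} to restrict to linear maximal regular sequences on $R$, then use Remark~\ref{mod-t} to track $\lin_1^S(I)$ through each quotient, noting $\min\{\cdot, 2\}$ is stable because $\Burch \leq 1$ throughout by Proposition~\ref{basic}(1) forces the target value to stabilize at $\min\{\lin_1, 2\}$ once it reaches $1$, and reduce to the base case. The main obstacle will be the bookkeeping in step (b): ensuring that we can always choose the linear regular sequence element to be outside $\Lin_1^S(I)$ as long as $\lin_1^S(I) < n$ — equivalently controlling when the $\min\{\cdot, 2\}$ caps the drop — and checking the base case matches, which pins down that $\lin_1^S(I)$, after removing a maximal linear regular sequence, equals $\Burch(S/I) = 1$ exactly when $I \subset \fm^2$.
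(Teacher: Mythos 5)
Your proposal goes off the rails at its very first step: from $\projdim_S I = 1$ you conclude that $\depth(S/I) = \dim S - 1$, but Auslander--Buchsbaum applied to $S/I$ (which has projective dimension $2$, not $1$) gives $\depth(S/I) = \dim S - 2$. This single slip propagates through everything that follows. It sends you toward a base case $\dim S = 1$ that cannot occur under the hypotheses (in a DVR every ideal is principal, so $\projdim_S I = 0$), and it produces the false bound $\Burch(S/I) \leq n-(n-1)=1$ from Proposition~\ref{basic}(1); this is wrong, since for example $I=\fm^2$ in $k[[x,y]]$ has projective dimension $1$ and Burch index $2$ by Proposition~\ref{basic}(2). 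The correct reduction, which the paper carries out using Remark~\ref{mod-t} and induction, is to the case $\dim S = 2$, where $\depth(S/I)=0$; the ``$\min\{\cdot,2\}$'' in the statement reflects exactly the bound $\Burch \leq \dim_k(\fm/\fm^2)-\depth = 2$ in that case, not a cap at $1$.

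Even granting a correct reduction, the heart of the proof is missing from your write-up. You correctly observe that $\lin_1^S(I)$ is governed by the ideal $J$ of entries of the Hilbert--Burch matrix $\varphi$ rather than by $I$ itself, but you never connect $J$ to $\BI_S(I)$, which is what the theorem requires. The paper does this in dimension $2$ via Proposition~\ref{bm}: for $t\in\fm-\fm^2$, one has $t\in\BI_S(I)$ if and only if $k$ is not a summand of $I/tI$; since $S/tS$ is a DVR, $I/tI$ is a direct sum of cyclic modules and has a $k$-summand if and only if $J \nsubseteq (t)+\fm^2$. Letting $t$ range over $\fm-\fm^2$ identifies $\BI_S(I)$ modulo $\fm^2$ as $\fm$, $J+\fm^2$, or $\fm^2$ according as $\lin_1^S(I)$ is $0$, $1$, or $2$, which gives the three values $\Burch=0,1,2$. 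Without this computation (or some substitute for it), the proposal does not establish the theorem.
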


\begin{proof} Factoring out an appropriate regular sequence to reduce to the depth 0 case, using  \ref{mod-t} and induction, we may assume $\dim S = 2$.

Let $J=J(I)$, the ideal generated by the entries of the Hilbert-Burch matrix of $I$. Consider $t\in \fm-\fm^2$. By \ref{bm}, $t\in \BI(I)$ if and only if $k$ is not  a summand of $I/tI$. The ring $S':= S/tS$ is a discrete valuation ring; write $s$ for its parameter. 

The $S'$-module $I/tI$ is a direct sum of copies of $S'$ and cyclic modules of the form  $S'/\fm^aS'$ for various $a\geq 1$. Thus, $k$ is a summand of $I/tI$  if and only if $\Lin_1^{S/tS}(I/tI) \nsubseteq  \fm^2(S/tS)$ or, equivalently,
$(t)+ J \nsubseteq (t)+\fm^2$. From this observation, $\BI_S(I)$ is equal to:  $\fm$ if $\lin_1(I)=0$, $J$ if $\lin_1(I)=1$, and $\fm^2$ if $\lin_1(I)=2$, and our statement follows.    
\end{proof}

Here we apply the above to a geometric case:
\begin{thm}
Let $R$ be the homogenous coordinating ring of $d>1$ generic points in $\PP^2_k$ where $k$ is an algebraically closed field. Then $\Burch(R)$ is 
\begin{itemize}
\item $1$ if $d=2$.
\item $0$ if $d=\binom{2s+1}{2}+s$ for some integer $s>0$. 
\item $2$ in all other cases. 
\end{itemize}
\end{thm}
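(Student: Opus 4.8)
The plan is to reduce everything to the computation of the minimal free resolution of $R$ over the polynomial ring $S=k[x,y,z]$, since for $d$ generic points in $\PP^2$ this resolution is completely understood by the Hilbert--Burch theorem together with the minimal resolution conjecture, which is a theorem in $\PP^2$ (work of Geramita--Maroscia--Roberts, or Gaeta). Concretely, the Hilbert function of $R$ is determined by $d$, the ideal $I$ of the points has projective dimension $1$ over $S$, and its Hilbert--Burch matrix has a predictable shape: all entries can be taken to be forms, and one knows exactly in which degrees the generators of $I$ and the first syzygies sit. The key invariant to extract is $\lin_1^S(I)=\dim_k (\fn/(J(I)+\fn^2))$, the dimension of the space of linear forms appearing in the Hilbert--Burch matrix; once we have this, \Cref{proj1} gives $\Burch(R)=\min\{\lin_1^S(I),2\}$ immediately, because $R=S/I$ has $I\subset\fm^2$ (the points impose no linear conditions, so $I$ has no linear generators when $d\geq 2$, except we must be a touch careful when $d=2$ — two points do lie on lines, but $I$ still has its lowest-degree generators in degree $1$; here $I=(\ell_1,\ell_2)$ is a complete intersection of two linear forms and $\Burch=1$ by a direct check, matching \Cref{basic}(4) after slicing, or rather by noting the Hilbert--Burch matrix is $(\ell_1,\ell_2)^T$ with both entries linear but $\lin_1=1$ since they span a $2$-dimensional space — wait, that gives $\min\{2,2\}=2$; so the $d=2$ case genuinely needs separate treatment and I would handle it by hand, observing $R$ is a $1$-dimensional ring whose Artinian reduction is $k[x,y]/(x,y)^{?}$ — more precisely two generic points give $R_{\mathrm{red}}$ with Hilbert function $1,2,2,\dots$, so after a generic linear slice one gets $k[x,y]/\mathfrak{q}$ with length $2$, a hypersurface section, hence $\Burch=1$ by \Cref{basic}(4)).

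For the generic case with $d\geq 3$, first I would recall that writing $d$ via its ``staircase'' one determines the two smallest integers $t$ with $h_R(t):=\dim_k R_t$ equal to or exceeding $d$; the socle-type numerics of the points being general forces the Betti table to be as small as possible in each strand. The resolution has the form $0\to \bigoplus S(-b_j)\to \bigoplus S(-a_i)\to S\to R\to 0$, and the entries of the Hilbert--Burch matrix in row $i$, column $j$ are forms of degree $b_j-a_i$. Linear entries occur exactly when $b_j-a_i=1$, i.e.\ when a syzygy degree is one more than a generator degree. The combinatorics of $d$ generic points says the generator degrees $a_i$ occupy (essentially) one or two consecutive values and likewise the $b_j$, so the matrix is ``almost linear'': every entry is linear or quadratic, and I need to count how many independent linear forms appear. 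The content is: $\lin_1^S(I)=3$ (the full $\fm/\fm^2$) unless the numerology is ``too balanced,'' in which case it drops. The value $d=\binom{2s+1}{2}+s = \tfrac{s(2s+1)(?)}{?}$ — let me not expand it — is precisely the triangular-type number at which the Betti table becomes a single pure strand (the resolution is ``as pure as possible''), the Hilbert--Burch matrix has a single generator degree and a single syzygy degree differing by more than $1$, so \emph{no} entry is linear, $\lin_1^S(I)=0$, hence $\Burch(R)=0$. Equivalently these are exactly the cases the ideal $I$ has a linear resolution after twist / is ``perfectly balanced,'' which is also where \Cref{Tor_ind}-type obstructions kick in.

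The concrete steps, in order: (i) record the Hilbert function $h_R$ of $d$ generic points and identify the initial degree $r$ with $h_R(r-1)<d\leq h_R(r)$; (ii) cite the minimal resolution theorem for points in $\PP^2$ to pin down the Betti numbers $\beta_{1,a_i}$ and $\beta_{2,b_j}$, observing that at most two consecutive degrees appear on each side and the ``overlap'' that produces linear entries is governed by whether $h_R(r)-d$ and $d - h_R(r-1)$ are both positive; (iii) translate ``a linear entry exists in the Hilbert--Burch matrix'' into the degree condition $\min_j b_j - \max_i a_i \leq 1$ together with genericity forcing those entries to be linearly independent modulo $\fm^2$ whenever there is more than one; (iv) conclude $\lin_1^S(I)\in\{0,2,3\}$ with $0$ occurring exactly for the special $d$, and $\min\{\lin_1^S(I),2\}$ then gives the claimed trichotomy via \Cref{proj1}; (v) dispose of $d=2$ separately as above. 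The main obstacle I anticipate is step (iii): knowing the Betti \emph{numbers} is not quite the same as controlling the \emph{linear forms} appearing in the presentation matrix — one must argue that for general points the degree-$(b_j-a_i)=1$ blocks of the Hilbert--Burch matrix have maximal rank modulo $\fm^2$, so that their entries span a $2$-dimensional (not merely $1$-dimensional) subspace of $\fm/\fm^2$ whenever the shape permits; this is where genericity of the point configuration is really used, likely via a semicontinuity/specialization argument or an explicit Gaeta-type construction exhibiting a configuration achieving the generic Betti table with visibly independent linear entries. Once that is in hand the rest is bookkeeping with the Hilbert function of $\PP^2$.
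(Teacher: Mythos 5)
Your proposal follows essentially the same route as the paper: reduce via \Cref{proj1} to computing $\lin_1^S(I)$, read off the linear entries of the Hilbert--Burch matrix from the known minimal resolution of generic points in $\PP^2$, identify $d=\binom{2s+1}{2}+s$ (the case $s=t$ in the parametrization $d=\binom{s+t+1}{2}+s$) as exactly when no linear entries occur, and use genericity to see that the linear entries otherwise span a space of dimension $\min\{3,n\}$. Your detour on $d=2$ (the ideal is $(z,xy)$, not two linear forms, so the Hilbert--Burch matrix has exactly one linear entry and $\lin_1=1$) lands in the right place, and this is precisely the single case $n=1$ in the paper's count.
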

\begin{proof}
Write $R=S/I$ where $S=k[x,y,z]$. Since $\projdim_S I=1$, by \ref{proj1} we just need to compute $\lin_1^S(I)$. The degrees of elements in the Hilbert-Burch matrix $H$ of $I$ are well-known, see for instance \cite[Exercises 12, 13, pp 50]{E}. First, write $d$ (uniquely) as $d= \binom{s+t+1}{2}+s$, where $s,t$ are non-negative integers. The linear forms in $H$ form a submatrix $L$ of size $(t+1)\times(t-s)$ if $t\geq s$, or $(s-t)\times s$ if $s\geq t$. Since we are in the generic situation, the space of linear forms has dimension $\min\{3,n\}$ where $n$ is the number of entries in $L$, thus $\lin_1(I)= \min\{2,n\}$. It is easy to see that $n=1$ if and only if $s=1, t=0$, $n=0$ if and only if $s=t$, and the assertions  are now clear.  \end{proof}

\begin{prop}\label{deg_soc}
Let $(S, \fm)$ be a polynomial ring and $I\subset \fm^2$ a homogenous ideal of depth $1$. Suppose that the minimal generating degree of $I:\fm$ is less than the minimal generating degree of $I$.  Then $\Burch(S/I) = \dim S$. 
\end{prop}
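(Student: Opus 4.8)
The plan is to reduce to the Artinian (depth $0$) case and then exhibit a minimal generator of $I$ that is a socle multiple, forcing $\BI_S(I)$ to be as small as possible, namely $\fm^2$. Concretely, since $\depth S/I = 1$, I would first factor out a general linear form $t \in \fm - \fm^2$ that is a nonzerodivisor on $S/I$; by Proposition \ref{Tor_ind} (or rather by the definition of the Burch index via linear regular sequences) and Remark \ref{mod-t} applied to the ideal $I$, passing to $\bar S = S/tS$ and $\bar I = (I+tS)/tS$ changes neither the Burch index nor the relevant linear-form data, and the hypothesis on generating degrees is preserved because $t$ is a general linear form. So I may assume $S$ is a polynomial ring (or regular local ring) of dimension $n$ and $R = S/I$ is Artinian, with the key homogeneous hypothesis $\indeg(I:\fm) < \indeg(I)$ still in force.

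Now the heart of the matter: I want to show $\BI_S(I) = I\fm : (I:\fm) \subseteq \fm^2$, which by Proposition \ref{basic}(1) (or by $\fm^2 \subseteq \BI_S(I)$ always) gives equality and hence $\Burch(S/I) = \dim_k \fm/\fm^2 = \dim S$. Suppose not: then some linear form $\ell \in \fm - \fm^2$ lies in $\BI_S(I)$, i.e. $\ell \cdot (I:\fm) \subseteq \fm I$. Pick a homogeneous element $g \in I:\fm$ of degree $e := \indeg(I:\fm)$, which by hypothesis satisfies $e < d := \indeg(I)$, so in particular $g \notin I$ (as $g$ has degree $< d$, the lowest degree in which $I$ is nonzero). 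Then $\ell g \in \fm I$, so $\ell g$ is a sum of terms $m_i f_i$ with $m_i \in \fm$ and $f_i \in I$; comparing degrees, since $\deg(\ell g) = e+1 \le d$, each such term has degree $\le d$, forcing $\deg f_i = d$ and $m_i$ linear, so in fact $\ell g \in \fm_1 \cdot I_d$ where $\fm_1, I_d$ denote the degree-$1$ and degree-$d$ graded pieces. The aim is to derive a contradiction — essentially that $g$ itself must then lie in $I$, or that $g$ can be modified by an element of $I:\fm$ of strictly smaller degree, contradicting minimality of $e$ — by a linear-algebra / general-position argument on the multiplication map $\fm_1 \otimes (\text{stuff}) \to \fm_{e+1}$ and using that $I:\fm$ in degree $e$ is, by definition of $I:\fm$, annihilated by $\fm$ modulo $I$. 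More precisely: since $g \in I:\fm$, we have $\fm g \subseteq I$, so $\ell g \in I_{e+1}$ automatically; the content of $\ell g \in \fm I$ is that $\ell g$ already lies in the image of $\fm_1 \otimes I_d$, and since $e+1 \le d$ this image is $0$ in degrees $< d$ and is $\fm_1 I_d \subseteq I_{d+1}$ when $e+1 = d$. Treating the two cases $e+1 < d$ and $e+1 = d$ separately, I would conclude in the first case that $\ell g = 0$ in $S$, impossible since $S$ is a domain and $\ell, g \ne 0$; in the second case $e = d-1$ and $\ell g \in \fm_1 I_{d}$, and I would use genericity of $\ell$ together with a dimension count to show this forces $g \in I_{d-1} = 0$, again a contradiction.

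The main obstacle is precisely the case $e+1 = d$ (equivalently $\indeg(I:\fm) = \indeg(I) - 1$), where the degree bookkeeping alone does not immediately kill $\ell g$: there one genuinely needs that $\ell$ was chosen general and that no single linear form $\ell$ can satisfy $\ell \cdot (I:\fm)_{d-1} \subseteq \fm_1 I_d$ unless $(I:\fm)_{d-1}$ maps into $I$, which would contradict $g \notin I$. I expect this to come down to the observation that $(I:\fm)_{d-1}$ is, modulo $I_{d-1} = 0$, a nonzero $k$-subspace of $S_{d-1}$ killed by all of $\fm_1$ in the quotient $S/I$, i.e. sitting in the socle of $S/I$ in degree $d-1$; multiplying a socle element by any linear form lands it in $I$, and the extra assertion $\ell g \in \fm_1 I_d$ rather than merely $\ell g \in I_d$ is what should be impossible for general $\ell$ — one shows the socle element, if it did satisfy this for all $\ell$ in a Zariski-dense set, would have to be in $I$ to begin with. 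Once that linear-algebra lemma is in place the proof is complete. Everything else (the reduction to the Artinian case, the degree comparisons, and the final conversion $\BI_S(I) = \fm^2 \Rightarrow \Burch = \dim S$) is routine given Proposition \ref{basic}, Proposition \ref{Tor_ind} and Remark \ref{mod-t}.
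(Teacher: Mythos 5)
Your core strategy --- show that no linear form lies in $\BI_S(I) = I\fm:(I:\fm)$ by comparing degrees --- is the paper's, but two things go wrong. First, the hypothesis ``ideal of depth $1$'' means $\depth_S I = 1$, i.e.\ $\depth S/I = 0$. This reading is forced: if $\depth S/I$ were $1$, the conclusion $\Burch(S/I)=\dim S$ would contradict Proposition~\ref{basic}(1), which bounds the Burch index by $\dim_k(\fm/\fm^2)-\depth R$; moreover the hypothesis $\indeg(I:\fm)<\indeg(I)$ already forces $I:\fm\neq I$, hence $\depth S/I=0$. So your opening reduction ``factor out a general linear nonzerodivisor on $S/I$'' rests on a misreading --- no such element exists --- and is in any case unnecessary: once $\BI_S(I)=\fm^2$ is established, Definition~\ref{def2}(1) gives $\Burch(S/I)=\dim_k(\fm/\fm^2)=\dim S$ directly.

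Second, and more importantly, the case $e+1=d$ that you single out as the ``main obstacle'' and leave unproven is not an obstacle; your degree bookkeeping there is off by one. Since $I$ is generated in degrees $\geq d$, the ideal $\fm I$ lives in degrees $\geq d+1$, so $(\fm I)_{e+1}=0$ whenever $e+1\leq d$. In particular $\fm_1 I_d$ sits in degree $d+1$, not $d$, so it cannot contain $\ell g$ when $\deg(\ell g)=e+1=d$. Hence in every case $\ell g\in\fm I$ forces $\ell g=0$, which is impossible in the domain $S$. No genericity of $\ell$, no dimension count, and no auxiliary linear-algebra lemma is needed: this one-line degree comparison ($\ell g$ has degree $g+1$, while $I\fm$ starts in degree at least $g+2$) is the entire content of the paper's proof. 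As written, your proposal is incomplete, since you explicitly defer the $e=d-1$ case to an unproved lemma, but the gap closes immediately once the degree of $\fm_1 I_d$ is computed correctly.
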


\begin{proof}
If $g$ is the minimal generating degree of $I:\fm$, then no linear forms can multiply a generator with degree $g$ to $I\fm$, which starts in degree at least $g+2$, thus $\BI_S(I) = \fm^2$, as required.
\end{proof}

If $I$ is a homogeneous ideal in a polynomial ring in $p$ variables, then we say that $I$ the resolution 
of $I$ is \emph{almost linear} if it is linear for $p-1$ steps where $p=\projdim_S I$.

\begin{cor}\label{almost_lin}
Let $(S, \fm)$ be a polynomial ring and $I\subset \fm^2$ a homogeneous ideal of projective dimension $p$ ideal with almost linear resolution. Suppose that $\lin_p(I)>0$. Then 
 $\Burch(S/I) = \dim S-\depth S/I$. 
\end{cor}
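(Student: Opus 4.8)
The plan is to reduce to the Artinian case by factoring out a maximal linear regular sequence, and then to identify $\BI_S(I)$ precisely using the almost-linear hypothesis together with the extra assumption $\lin_p(I) > 0$. First I would set $t = \depth S/I$ and choose a regular sequence $x_1,\dots,x_t \in \fm \setminus \fm^2$ on $S/I$; by Remark~\ref{mod-t} and induction (exactly as in the proofs of Theorem~\ref{proj1} and the corollary after Theorem~\ref{main-theo}) the quantities $\lin_i(I)$, the property of having an almost-linear resolution, and the value of $\Burch(S/I)$ all descend compatibly, so I may assume $\depth S/I = 0$, i.e. $I$ is $\fm$-primary and $\projdim_S I = \dim S - 1 =: p$. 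The goal then becomes: $\Burch(S/I) = \dim S = p+1$, equivalently $\BI_S(I) = \fm^2$.

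Next I would analyze the socle of $S/I$ via the (now linear, except at the last step) minimal free resolution of $I$ over $S$. Since the resolution of $I$ is linear for $p-1$ steps and $I \subset \fm^2$, the generators of $I$ sit in a single degree $q \geq 2$, and the resolution has the shape $F_\bullet$ with $F_i$ generated in degree $q+i$ for $0 \le i \le p-1$, while $F_p$ is generated in degrees $\geq q+p$ with $\lin_p(I) > 0$ meaning some generator of $F_p$ also lies in degree $q+p$ (a linear entry in the last matrix). Dualizing the resolution of $R = S/I$ (or using that $R$ is Artinian and $\omega_R = \Ext^{p+1}_S(R, S)$ up to twist), the socle degrees of $R$ are read off from the top degrees of the resolution; the almost-linear condition forces the socle to be concentrated in low degree — specifically, the key point is that $I:\fm$ is generated in degree $q-1$, strictly below the generating degree $q$ of $I$. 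Once that is established, Proposition~\ref{deg_soc} applies verbatim: a linear form cannot carry a degree-$(q-1)$ element of $I:\fm$ into $I\fm$, which lives in degrees $\geq q+1$, so $\BI_S(I) = I\fm : (I:\fm) = \fm^2$, giving $\Burch(S/I) = \dim S$.

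The main obstacle I anticipate is the homological bookkeeping in the second step: precisely extracting the generating degree of $I:\fm = \soc(R)(\text{shift}) + I$ from the Betti table of an almost-linear resolution, and seeing that the hypothesis $\lin_p(I) > 0$ is exactly what is needed to push the top socle degree down to $q-1$ rather than $q$ (without it one only gets $\soc$ in degrees $\leq q$, which would not beat the generating degree of $I$ and Proposition~\ref{deg_soc} would not apply — consistent with the earlier example where $\lin_p = 0$ and $\Burch = 0$). Concretely, I would compute $\Ext^{p+1}_S(R,S)$ from the dual complex $F_\bullet^\vee$: its generators correspond to the top-degree generators of $F_{p+1}$ in a resolution of $R$, equivalently to the generators of $F_p$ in the resolution of $I$; the linear generator of $F_p$ in degree $q+p$ contributes a minimal socle generator of $R$ in degree $(q+p) - (p+1) - (\text{twist by }\sum\deg x_i \text{ undone}) = q-1$ after accounting for the Koszul twist $\dim S$ in $\omega_S = S(-\dim S)$ — I would check this index arithmetic carefully, as sign/shift errors here are the only real risk. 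Everything else is a direct appeal to Remark~\ref{mod-t}, Proposition~\ref{deg_soc}, and Proposition~\ref{basic}(1) for the reverse inequality.
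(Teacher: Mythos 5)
Your proposal is correct and follows essentially the same route as the paper: reduce to the Artinian case, use the almost-linear resolution together with $\lin_p(I)>0$ to produce a minimal socle generator of $S/I$ in degree $g-1$ (one below the minimal generating degree $g$ of $I$), and then apply Proposition~\ref{deg_soc}. The paper simply asserts the socle-degree claim that you verify explicitly via the dual of the resolution; your degree arithmetic ($(q+p)-(p+1)=q-1$) is the correct justification of that assertion.
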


\begin{proof}
We can assume $\depth S/I=0$. Let $g$ be the smallest degree of a generator of $I$. Then the resolution assumption tells us that the  socle of $I$ has a minimal generator in degree $g-1$, and \ref{deg_soc} applies.  

\end{proof}

\begin{example}
Corollary \ref{almost_lin} cannot be improved. There are Artinian Gorenstein ideals with almost linear resolution in each generating degree $d>1$ and number of variables $n>2$, see for instance \cite[Example 3.2]{DE}. Since the quotients are Gorenstein and not hypersurfaces, the Burch index is $0$ (the syzygies of $k$ will never have $k$ summands, as they are indecomposable). 
\end{example}

\begin{cor}
Let $(S, \fm)$ be a polynomial ring and $I\subset \fm^2$  an ideal with linear resolution. Then 
$\Burch(S/I) = \dim S-\depth S/I$. 
\end{cor}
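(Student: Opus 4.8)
The plan is to reduce the assertion to Corollary~\ref{almost_lin}, whose hypotheses are a slight weakening of the linear-resolution condition. First I would observe that an ideal $I\subset\fm^2$ with linear resolution is a fortiori an ideal with almost linear resolution: if the resolution is linear for all $p=\projdim_S I$ steps, then it is linear for the first $p-1$ steps, so the first hypothesis of Corollary~\ref{almost_lin} is automatic. The only remaining thing to check is the numerical hypothesis $\lin_p(I)>0$, i.e.\ that the last map in the minimal $S$-free resolution of $I$ contains a nonzero linear entry. But this is immediate from linearity: every map in the resolution, including the $p$-th one, consists of linear forms, and the $p$-th map is not zero (the resolution has length exactly $p$ with nonzero last free module), so it must have a nonzero linear entry. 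Hence $\lin_p(I)>0$.

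With both hypotheses verified, Corollary~\ref{almost_lin} applies directly and yields $\Burch(S/I)=\dim S-\depth S/I$, which is exactly the claimed formula. I do not expect any genuine obstacle here; the statement is essentially a special case packaged as a corollary. The only point requiring a word of care is the degenerate possibility $p=0$, i.e.\ $I$ principal generated in a single degree with a linear (length-zero) resolution — but $I\subset\fm^2$ forbids a linear resolution in that case unless $I=0$, so this does not arise, and one may silently assume $I\neq 0$ and $p\geq 1$. If one prefers to be fully self-contained rather than invoking Corollary~\ref{almost_lin}, one can instead argue directly: after factoring out a maximal linear regular sequence and passing to the depth-zero case (using Remark~\ref{mod-t} and induction, exactly as in the proof of Theorem~\ref{dim2}), linearity of the resolution of $I$ forces the socle of $S/I$ to sit in degree $g-1$ where $g$ is the least generating degree of $I$, so $I:\fm$ has a generator in degree $g-1<g$, and Proposition~\ref{deg_soc} gives $\BI_S(I)=\fm^2$, hence $\Burch(S/I)=\dim S-\depth S/I$. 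Either route is short; the first is the natural one to write.
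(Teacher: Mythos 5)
Your proof is correct and is exactly the argument the paper intends: the corollary is stated without proof precisely because a linear resolution is an almost linear one in which the last matrix consists of (nonzero) linear forms, so $\lin_p(I)>0$ and Corollary~\ref{almost_lin} applies. The only nitpick is your dismissal of the case $p=\projdim_S I=0$: a principal ideal generated in a single degree $d\geq 2$ is conventionally said to have a ($d$-)linear resolution, but that case is covered anyway since $S/I$ is then a singular hypersurface and Proposition~\ref{basic}(4) gives $\Burch(S/I)=1=\dim S-\depth S/I$.
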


\begin{example}[Fibre product] Let $A=k[[x_1,\dots,x_a]]/I$ and $B=k[[y_1,\dots, y_b]]/J$ be minimal presentations of local rings of depth $0$. Let $S=A\times_kB = k[[x_1,\dots, x_a, y_1, \dots, y_b]]/(I+J+(x_1,\dots,x_a)(y_1,\dots,y_b))$. Then it can be shown that $\Burch(S) = \Burch(A)+\Burch(B)$. 
\end{example}




\begin{thebibliography}{99}
\addcontentsline{toc}{section}{Bibliography}

\bibitem{A} L. Avramov, \emph{Infinite resolutions}, Six lectures on commutative algebra, 1--118, Mod. Birkh\"auser Class., Birkh\"auser Verlag, Basel, 2010.

\bibitem{AINS} L. Avramov, S. Iyengar, S. Nasseh and S. Sather-Wagstaff, \emph{Persistence of homology over commutative noetherian rings}, J. Algebra, to appear. 


\bibitem{BH} W.~ Bruns and J.~Herzog, \emph{Cohen-Macaulay Rings}, Cambridge Studies in Advanced Mathematics, {\bf 39}, Cambridge, Cambridge University Press, 1993.


\bibitem{B} {L. Burch},  \emph{On ideals of finite homological dimension in local rings}, {Proc. Cambridge Philos. Soc.} {\bf 64} (1968), 941--948.


\bibitem{DKT}
{H. Dao, T. Kobayashi and R. Takahashi}, \emph{Burch ideals and Burch rings}, {Algebra Number Theory},  Algebra Number Theory  {\bf 14} (2020), no. 8, 2121–2150.

\bibitem{DE}
{H. Dao and D. Eisenbud}, \emph{Linearity of Free Resolutions of Monomial Ideals}, Res. Math. Sci. {\bf 9} (2022), no. 2, Paper No. 35, 15 pp.

\bibitem{E}
D. Eisenbud, \emph{The geometry of syzygies},  Graduate Texts in Mathematics, 229. Springer-Verlag, New York, 2005.

\bibitem{G}
Gulliksen, T., A proof of the existence of minimal $R$-algebra resolutions.
Acta Math. 120 (1968) 53--58.
  
  \bibitem{KV}
A. Kustin and A. Vraciu, Totally reflexive modules that are close to Gorenstein, J. Algebra 571 (2021)190--231.

\bibitem{M2} D. R. Grayson and M. E. Stillman, Macaulay2, a software system for research in algebraic geometry,
Available at \url{https://math.uiuc.edu/Macaulay2/}


\end{thebibliography}
\end{document}